\documentclass[11pt, reqno]{amsart}
\usepackage[dvipsnames,usenames]{color}
\usepackage{amsmath}
\usepackage{amsfonts}
\usepackage{amssymb}
\usepackage{mathtools}
\usepackage{mathrsfs}
\usepackage{amsthm}
\usepackage{hyperref}
\usepackage{url}
\usepackage{comment}
\date{}
\newtheorem{theorem}{Theorem}
\newtheorem{thm}{Theorem}
\newtheorem{lemma}{Lemma}

\newtheorem{cor}{Corollary}
\newtheorem{prop}{Proposition}
\theoremstyle{definition}

\numberwithin{equation}{section}
\newtheorem{rem}{Remark}

\allowdisplaybreaks[4]
\newcommand{\abs}[1]{\left\vert#1\right\vert}

\newcommand{\D}{\mbox{$\mathbb{D}$}}
\theoremstyle{remark}

\numberwithin{equation}{section}
\newcommand{\ID}{{\mathbb D}}

\begin{document}
	\setcounter{page}{1}

	\title[Integral mean estimates for $(\alpha,\beta)$-harmonic functions]{Integral mean estimates for $(\alpha,\beta)$-harmonic functions}
	
	\author[Z.-G. Wang, Brindha Valson E and R. Vijayakumar]{Zhi-Gang Wang*, Brindha Valson E and R. Vijayakumar}

	\address{\noindent Zhi-Gang Wang\vskip.03in
		School of Mathematics and Statistics, Hunan First Normal University, Changsha 410205, Hunan, P. R. China.}	
	\email{sjyzhigangwang$@$hnfnu.edu.cn}
	
	\address{\noindent Brindha Valson E \vskip.03in
        Department of Mathematics,
		National Institute of Technology Calicut, Calicut-673 601, India.}
	\email{brindhavalson$@$gmail.com}

    \address{\noindent R. Vijayakumar\vskip.03in
        Department of Mathematics,
		National Institute of Technology Calicut, Calicut-673 601, India.}
	\email{mathesvijay8$@$gmail.com}

    \subjclass[2020]{30A10, 30H10, 30C62, 30C55.
	}
	\keywords{Harmonic function, hypergeometric function, Poisson integral kernel, integral mean, Hardy space.}
	

	
	\thanks{$^*$Corresponding author.}
	
	\date{\today}

	\begin{abstract}
We establish sharp $L^p$ integral mean estimates for $(\alpha,\beta)$-harmonic functions on the unit disk. Explicit bounds for the functions and their partial derivatives are obtained in terms of boundary data, by means of the associated Poisson-type kernel and hypergeometric function representations. As applications, we derive coefficient estimates and Hardy space-type results, extending well-known inequalities for classical harmonic and $\alpha$-harmonic functions to the $(\alpha,\beta)$-harmonic setting.
	\end{abstract} \maketitle

	
	\section{Introduction}
	Let $\mathbb{D} = \{z : |z| < 1\}$ be the open unit disk and $\mathbb{T} = \{z : |z| = 1\}$ the unit circle.
We denote by $C^m(\Omega)$ the space of all complex-valued $m$-times continuously differentiable
functions on $\Omega$, where $\Omega \subset \mathbb{C}$ and $m \in \mathbb{N} \cup \{0\}$.
In particular, $C(\Omega) := C^0(\Omega)$ stands for the space of all continuous functions on $\Omega$.
Let $\mathbb{Z}^-$ denote the set of negative integers.

The second-order partial differential operator in $\mathbb{D}$ is given by
\[
\Delta_{\alpha,\beta}
= \bigl(1-|z|^2\bigr)\frac{\partial^2}{\partial z\partial\overline{z}}
+ \alpha z\frac{\partial}{\partial z}
+ \beta \overline{z}\frac{\partial}{\partial\overline{z}}
- \alpha\beta,
\]
where $\alpha, \beta \in \mathbb{C}$, and
\[
\frac{\partial}{\partial z}
= \frac12\left(\frac{\partial}{\partial x} - i\frac{\partial}{\partial y}\right),\quad
\frac{\partial}{\partial\overline{z}}
= \frac12\left(\frac{\partial}{\partial x} + i\frac{\partial}{\partial y}\right).
\]
We are concerned with the homogeneous equation
\begin{equation}\label{Operator}
\Delta_{\alpha,\beta}\, w = 0
\end{equation}
in $\mathbb{D}$.
A twice continuously differentiable function $w$ in $\mathbb{D}$
is called $(\alpha,\beta)$-harmonic if it satisfies equation \eqref{Operator}.
If $w$ is $(\alpha,\beta)$-harmonic, then $\overline{w}$ is $(\beta,\alpha)$-harmonic.
$(\alpha,\beta)$-harmonic functions constitute a natural and important class of solutions to degenerate elliptic equations, unifying classical harmonic, $\alpha$-harmonic, and several other canonical function classes. They play a significant role in potential theory, harmonic analysis, and complex analysis, with close connections to Poisson kernels, integral means, Hardy spaces, and hypergeometric functions.

For $\alpha > -1$, $(0,\alpha)$-harmonic functions are simply referred to as $\alpha$-harmonic functions.
When $\alpha >-1$, $(\frac{\alpha}{2},\frac{\alpha}{2})$-harmonic functions
coincide with the real-kernel $\alpha$-harmonic functions.
In particular, $(0,0)$-harmonic functions are just the classical harmonic functions. 
This one-parameter family has been studied in \cite{cw,lr,Olofsson,OlofssonWittsten}.

We consider the associated Dirichlet boundary value problem
for functions $w$ satisfying equation \eqref{Operator}, namely,
\begin{equation}\label{Dirichlet_problem}
\begin{cases}
\Delta_{\alpha,\beta} w = 0 & \text{in } \mathbb{D}, \\
w = f & \text{on } \mathbb{T},
\end{cases}
\end{equation}
where the boundary function $f \in \mathcal{C}(\mathbb{T})$,
and the boundary condition in \eqref{Dirichlet_problem}
is understood as $w_r \to f$ in $\mathcal{C}(\mathbb{T})$ as $r \to 1^-$,
with $w_r\left(e^{i\theta}\right)=w\left(re^{i\theta}\right)$ for 
$e^{i\theta}\in\mathbb{T}$ and $r\in[0,1)$.

An important example of $(\alpha,\beta)$-harmonic functions
is the function $u_{\alpha,\beta}$ with parameters $\alpha,\beta\in\mathbb{C}$,
defined by
\begin{equation}\label{u_alpha_beta}
u_{\alpha,\beta}(z)
=
\frac{\left(1-|z|^2\right)^{\alpha+\beta+1}}
{(1-z)^{\alpha+1}(1-\overline{z})^{\beta+1}}
\quad (z\in\mathbb{D}).
\end{equation}
This function plays a key role in the theory of $(\alpha,\beta)$-harmonic functions,
see e.g., \cite{Schwarz_for_pq,ag,km,KlintbergOlofsson,q}.

This paper extends the $H^p$ theory for $M$-harmonic functions in \cite{H^p_for_M_harmonic} to the setting of bi-parameter $(\alpha,\beta)$-harmonic functions. 
By using refined estimates for the Poisson-type kernel, we establish more general sharp integral inequalities. 
In addition, we study integral mean estimates for $(\alpha,\beta)$-harmonic functions on the unit disk $\D$. 
We obtain sharp $L^p$ integral mean estimates for such functions, as well as explicit bounds for the functions and their partial derivatives in terms of boundary data, via the corresponding Poisson-type kernels and hypergeometric function representations. 
As applications, we derive coefficient estimates and Hardy space-type results, which extend several classical inequalities for harmonic and $\alpha$-harmonic functions to the general $(\alpha,\beta)$-harmonic framework.

	\vskip .05in
	 
	\section{Preliminaries}
We begin this section by recalling several basic definitions and relevant results for our study.
\subsection{Jensen's inequality}
	
	Suppose \(\varphi : [c, d] \to \mathbb{R}\) is convex, and the functions \(f\), \(p\) are integrable on \([a,b]\), where \(c < d\) and \(a < b\). If for any \(x \in [a,b]\), \(f(x) \in [c, d]\), \(p(x) \ge 0\), and \(\int_a^b p(x) dx > 0\), then
	\[
	\varphi\left(\frac{\int_a^b f(x) p(x) dx}{\int_a^b p(x) dx}\right) \le \frac{\int_a^b \varphi(f(x)) p(x) dx}{\int_a^b p(x) dx}.
	\]
	
	\subsection{Hardy space}
	For $p \in (0, \infty]$ and a measurable complex-valued function $f$ defined on $\mathbb{D}$, the integral mean of $f$ is defined by
	$$M_p(r,f) = \left(\frac{1}{2\pi} \int_0^{2\pi} \abs{f\left(re^{i\theta}\right)}^p d\theta \right)^{1/p}\quad (0<p<\infty)$$
	and
    $$M_\infty(r,f) = \underset{0\le\theta\le2\pi}{\mathrm{ess}\sup}\left|f\left(re^{i\theta}\right)\right|.$$

	The generalized Hardy space $H_G^p(\ID)$ consists of all measurable functions $f$ from $\ID$ to $\mathbb{C}$ such that $M_p(r,f)$ bounded as $r\rightarrow 1$.  The classical Hardy space $H^p(\ID)$ (resp. $h^p(\ID)$) is the set of all elements of $H_G^p(\ID)$  which are analytic (resp. harmonic) on $\ID$ (see  \cite{Duren,w}).
    
	We denote by $h^p_{\alpha,\beta}(\ID)$ the corresponding Hardy space for $(\alpha,\beta)$-harmonic functions. A comprehensive study on $h^p_{\alpha,\beta}(\ID)$ can be found in \cite{H^p_for_M_harmonic}. 
	\subsection{Lebesgue measurable space and subharmonic functions} Denote by $L^p(\mathbb{T})$, for $p\in[1,\infty]$, the space of all measurable functions $f:\mathbb{T}\rightarrow\mathbb{C}$ such that
	\begin{equation*}
		\|f\|_{L^p}=
		\begin{cases}
			\left(\frac{1}{2\pi} \int_0^{2\pi} |f(e^{i\theta})|^p d\theta \right)^{1/p}   &(1\leq p < \infty), \\
			{\mathrm{ess}\sup}\left|f\left(e^{i\theta}\right)\right| & (p=\infty).
		\end{cases}
	\end{equation*}
Moreover, let $G$ be an open connected subset of $\mathbb{C}$, and let $f:G\rightarrow\mathbb{R}$ be continuous. We say that $f$ is subharmonic if for every closed disk $\overline{B(a,r)}\subset G$,
  $$f(a)\leq \frac{1}{2\pi}\int_0^{2\pi}f\left(a+re^{i\theta}\right)d\theta.$$
Recall that if $\Delta f\geq 0$ in $G$, then $f$ is subharmonic in $G$.

	\subsection{Gauss hypergeometric functions}
	
	The Gauss hypergeometric function is defined by the series
	$$
	F(a, b; c; z) = \sum_{n=0}^{\infty} \frac{(a)_n (b)_n}{(c)_n} \frac{z^n}{n!}
	$$
	for $a,b,c \in \mathbb{C}$ such that $c\neq-1,-2,\dots,$ where \((a)_0 = 1\) and $$(a)_n = a(a+1) \cdots (a+n-1)=\frac{\Gamma(a+n)}{\Gamma(a)} \quad (n=1,2,\ldots)$$ is the Pochhammer or ascending factorial notation. The standard Gamma function is defined by $$\Gamma(z)=\int_{0}^{\infty}t^{z-1}e^{-t}\,dt$$ for $\mbox{Re}(z)>0$. It is well-known that $\Gamma$ continues to a meromorphic function in $\mathbb{C}$ with simple poles at the points $z=0,-1,-2,\dots$.
	
	We now recall the well-known results \cite[Chapter 2] {Specialfunctions} about Gaussian hypergeometric functions:
	\begin{equation}\label{F(a,b;c;x)prop_1}
		\lim_{x \to 1} F(a,b;c;x) = \frac{\Gamma(c) \Gamma(c - a - b)}{\Gamma(c - a)\Gamma(c - b)}\quad  (\mbox{Re}(c - a - b) > 0)
	\end{equation}
	 and 
     \begin{equation}\label{F(a,b;c;x)prop_2}
     \frac{dF(a,b;c;x)}{dx} = \frac{ab}{c} F(a+1, b+1; c+1; x).
     \end{equation}

      \begin{lemma} {\rm(See \cite{KlintbergOlofsson})}\label{Poisson_integral_Rep}
		Let $\alpha,\beta \in \mathbb{C} \setminus \mathbb{Z}^-$ such that $\mbox{\rm Re}(\alpha+\beta)>-1.$ Let $f\in \mathcal{C}(\mathbb{T})$. Then a function $w$ in $\ID$ satisfies \eqref{Dirichlet_problem} if and only if it has the form of a Poisson type integral
		\begin{equation}\label{poisson_integral}
			w(z) = K_{\alpha,\beta}[f](z) = \frac{1}{2\pi} \int_0^{2\pi} K_{\alpha,\beta}\left(z e^{-it}\right) f\left(e^{it}\right)dt,
		\end{equation}
		where
		\[
		K_{\alpha,\beta}(z) = c_{\alpha,\beta} \frac{(1 - |z|^2)^{\alpha + \beta + 1}}{(1 - z)^{\alpha+1} (1 - \overline{z})^{\beta+1}}
		\]
		is the $(\alpha,\beta)$-harmonic Poisson kernel in $\ID$, and
		\begin{equation}\label{23}
		c_{\alpha,\beta} = \frac{\Gamma(\alpha+1) \Gamma(\beta+1)}{\Gamma(\alpha + \beta + 1)}.
		\end{equation}
	\end{lemma}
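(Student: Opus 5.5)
The plan is to prove the two directions separately, working throughout with Fourier expansions in the angular variable. Write a solution as $w(re^{i\theta})=\sum_{k\in\mathbb{Z}} w_k(r)e^{ik\theta}$. In polar coordinates one has $z\partial_z=\tfrac12(r\partial_r - i\partial_\theta)$ and $\bar z\partial_{\bar z}=\tfrac12(r\partial_r+i\partial_\theta)$, together with $\partial_z\partial_{\bar z}=\tfrac14\Delta$. Applying $\Delta_{\alpha,\beta}$ term by term therefore decouples \eqref{Operator} into a family of second-order ODEs in $r$, one for each $k$. After the substitution $x=r^2$ and the ansatz $w_k=r^{|k|}g_k(r^2)$, each of these becomes a hypergeometric equation in $x$.

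For $k\ge 0$ we expect the solution regular at $x=0$ to be
\[
g_k(x)=F(-\alpha,\,k-\beta;\,k+1;\,x),
\]
and the case $k<0$ follows by the symmetry $\alpha\leftrightarrow\beta$, $k\to -k$. I would check this directly: insert the ansatz and compare the coefficients $a=-\alpha$, $b=k-\beta$, $c=k+1$ with the standard form $x(1-x)g''+[c-(a+b+1)x]g'-abg=0$. The second solution of each ODE has a $\log$ or $x^{-k}$ singularity at $0$, and a $C^2$ function cannot carry it. Consequently every $(\alpha,\beta)$-harmonic $w$ has the form
\[
w(re^{i\theta})=\sum_k \hat w_k\, r^{|k|}\,\frac{g_k(r^2)}{g_k(1)}\,e^{ik\theta},
\]
provided $g_k(1)\neq 0$. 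By \eqref{F(a,b;c;x)prop_1} and the hypothesis $\mathrm{Re}(\alpha+\beta)>-1$, the limit $g_k(1)=\frac{\Gamma(k+1)\Gamma(\alpha+\beta+1)}{\Gamma(k+\alpha+1)\Gamma(\beta+1)}$ exists. It is nonzero and finite because $\alpha,\beta\notin\mathbb{Z}^-$.

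Next comes the kernel identification. I will expand $K_{\alpha,\beta}(z)$ using $(1-z)^{-\alpha-1}=\sum\frac{(\alpha+1)_m}{m!}z^m$ and the analogous series in $\bar z$. This shows that $K_{\alpha,\beta}$ is itself $(\alpha,\beta)$-harmonic, either as the translate of $u_{\alpha,\beta}$ from \eqref{u_alpha_beta} or by a direct computation. Its $k$-th Fourier coefficient is $c_{\alpha,\beta}\,r^{|k|}G_k(r^2)$ for some function $G_k$. Since the solution space for each $k$ is one-dimensional, $G_k$ must be a multiple of $g_k$, and the normalizing constant $c_{\alpha,\beta}$ from \eqref{23} is exactly what makes the coefficient tend to $1$ as $r\to1$. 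Because $\Delta_{\alpha,\beta}$ commutes with rotations, $K_{\alpha,\beta}[f]$ is $(\alpha,\beta)$-harmonic for every $f\in\mathcal C(\mathbb{T})$.

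Sufficiency then reduces to showing $K_{\alpha,\beta}[f]_r\to f$ uniformly, which is an approximate-identity argument. The total mass $\frac1{2\pi}\int K_{\alpha,\beta}(re^{-it})\,dt$ is the $k=0$ coefficient and tends to $1$. Away from $t=0$, the factor $(1-r^2)^{\mathrm{Re}(\alpha+\beta)+1}$ forces $K_{\alpha,\beta}\to0$ uniformly. The remaining step, and in my view the main obstacle, is a uniform bound on the $L^1$ norm
\[
\frac1{2\pi}\int|K_{\alpha,\beta}(re^{it})|\,dt,
\]
because the kernel is complex and not positive. Here $|1-re^{it}|^{-\mathrm{Re}\,\alpha-\mathrm{Re}\,\beta-2}$ is multiplied by bounded factors $e^{\pm\mathrm{Im}\,\alpha\cdot\arg(1-z)}$. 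The standard estimate $\int|1-re^{it}|^{-s-1}dt\asymp(1-r)^{-s}$ for $s>0$ (with a logarithm at $s=0$ and boundedness for $s<0$), combined with $\mathrm{Re}(\alpha+\beta)+1>0$, gives the uniform bound.

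Necessity uses uniqueness. If $w$ solves \eqref{Dirichlet_problem}, set $v=w-K_{\alpha,\beta}[f]$. Then $v$ is $(\alpha,\beta)$-harmonic with $v_r\to0$ uniformly, so every Fourier coefficient satisfies $\hat v_k\, r^{|k|}g_k(r^2)/g_k(1)\to 0$. This forces $\hat v_k=0$ for all $k$, hence $v\equiv0$ and $w=K_{\alpha,\beta}[f]$.
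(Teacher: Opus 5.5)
Your proposal is correct. Note that the paper gives no proof of this lemma; it is quoted from Klintberg--Olofsson. So the only related material in the paper is the kernel bound \eqref{WVB_3.3} and the hypergeometric integral used in Theorem~\ref{Integral mean thm}.

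Your argument is essentially the standard proof behind the cited result:
\begin{itemize}
\item the mode-by-mode reduction of $\Delta_{\alpha,\beta}w=0$ to the hypergeometric equation, which is also the content of Theorem~\ref{thmB};
\item Gauss's formula, showing that $g_k(1)$ is finite (because $\mathrm{Re}(\alpha+\beta)>-1$) and nonzero (because $\alpha,\beta\notin\mathbb{Z}^-$, which is exactly where that hypothesis enters uniqueness);
\item an approximate-identity argument for sufficiency;
\item coefficientwise uniqueness for necessity.
\end{itemize}
I checked the reduction: it gives $a=-\alpha$, $b=k-\beta$, $c=k+1$. I also checked the value of $g_k(1)$ and the exclusion of the singular second solution at $r=0$.

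A few points should be tightened.

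\textbf{Normalization of the Fourier modes.} You assert, but do not check, that $c_{\alpha,\beta}$ makes \emph{every} Fourier mode of the kernel tend to $1$. Compare the lowest-order terms as $r\to0$ in your binomial expansion, or use Euler's transformation $F(\alpha+k+1,\beta+1;k+1;x)=(1-x)^{-\alpha-\beta-1}F(-\alpha,k-\beta;k+1;x)$. Either gives $G_k=\frac{(\alpha+1)_k}{k!}\,g_k$ for $k\ge0$, and symmetrically with $\beta$ for $k<0$. Gauss's formula then yields $c_{\alpha,\beta}\frac{(\alpha+1)_k}{k!}\,g_k(1)=1$. In fact your sufficiency proof only needs the case $k=0$, namely total mass $c_{\alpha,\beta}F(-\alpha,-\beta;1;r^2)\to1$. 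The limits for $k\neq0$ then follow by applying the approximate identity to $f(e^{it})=e^{ikt}$.

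\textbf{The kernel.} $K_{\alpha,\beta}=c_{\alpha,\beta}u_{\alpha,\beta}$ is a scalar multiple of $u_{\alpha,\beta}$, not a translate. Its $(\alpha,\beta)$-harmonicity is a short direct computation that you should include or cite.

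\textbf{The $L^1$ bound.} Your uniform $L^1$ bound can be made explicit with the paper's own computation. Set $s=\mathrm{Re}(\alpha+\beta)$. Then \eqref{WVB_3.3}, together with Lemma~\ref{lem4} and Euler's transformation, gives
$\frac{1}{2\pi}\int_0^{2\pi}|K_{\alpha,\beta}(re^{it})|\,dt\le|c_{\alpha,\beta}|\exp\bigl(\tfrac{\pi}{2}|\mathrm{Im}(\alpha-\beta)|\bigr)F\bigl(-\tfrac{s}{2},-\tfrac{s}{2};1;r^2\bigr)\le|c_{\alpha,\beta}|\exp\bigl(\tfrac{\pi}{2}|\mathrm{Im}(\alpha-\beta)|\bigr)B(\alpha,\beta)$.
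The last step holds because the Taylor coefficients of $F(-\tfrac{s}{2},-\tfrac{s}{2};1;x)$ are nonnegative and the series converges at $x=1$ when $s>-1$.
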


     \begin{lemma}{\rm(See \cite{Specialfunctions})}\label{Lemma1}
		Let \(c > 0,\, a \le c,\, b \le c\) and \(ab \le 0\) (or \(ab \ge 0\)). Then the function \(F(a,b;c;x)\) is decreasing (or increasing) on \((0,1)\).
	\end{lemma}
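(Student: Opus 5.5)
The plan is to read off the sign of $dF/dx$ from the differentiation formula \eqref{F(a,b;c;x)prop_2}. For $x\in(0,1)$ it gives
\[
\frac{d}{dx}F(a,b;c;x)=\frac{ab}{c}\,F(a+1,b+1;c+1;x).
\]
Since $c>0$, the factor $ab/c$ has the sign of $ab$. So it suffices to show that $F(a+1,b+1;c+1;x)>0$ for every $x\in(0,1)$. Then the derivative is $\le 0$ when $ab\le 0$ and $\ge 0$ when $ab\ge 0$, which gives the claimed monotonicity. Note that $c+1>1$, so the series $F(a+1,b+1;c+1;x)$ is well defined and converges for $|x|<1$.

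The main obstacle is this positivity, because $a+1$ and $b+1$ may be negative and the coefficients of $F(a+1,b+1;c+1;x)$ need not be nonnegative. To get around this I will use Euler's transformation (standard, see \cite[Chapter 2]{Specialfunctions}):
\[
F(A,B;C;x)=(1-x)^{C-A-B}F(C-A,C-B;C;x) \qquad (x\in(0,1)).
\]
Taking $A=a+1$, $B=b+1$, $C=c+1$ gives
\[
F(a+1,b+1;c+1;x)=(1-x)^{c-a-b-1}\,F(c-a,\,c-b;\,c+1;\,x).
\]

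The factor $(1-x)^{c-a-b-1}$ is positive on $(0,1)$. The remaining series has coefficients
\[
\frac{(c-a)_n(c-b)_n}{(c+1)_n\,n!},
\]
and these are all nonnegative. Indeed, $c-a\ge 0$ and $c-b\ge 0$ make each Pochhammer symbol in the numerator a product of nonnegative numbers, and $c+1>0$ makes $(c+1)_n>0$. The constant term equals $1$, so for $x\in(0,1)$ the sum is at least $1$, hence positive. It follows that $F(a+1,b+1;c+1;x)>0$ on $(0,1)$, and the sign argument of the first paragraph completes the proof.

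A small point to record along the way is the degenerate case $ab=0$. There $F$ is constant (identically $1$) and is therefore both nonincreasing and nondecreasing, which is consistent with the non-strict reading of ``decreasing/increasing'' in the statement.
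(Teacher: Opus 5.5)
Your proof is correct. The hypotheses are used where they are needed: $a\le c$ and $b\le c$ make $(c-a)_n(c-b)_n\ge 0$, and $c>0$ gives both the sign of $ab/c$ and $(c+1)_n>0$. Euler's transformation holds for arbitrary real $a,b$ here, because $c+1$ is not a nonpositive integer.

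The paper gives no proof of this lemma; it simply quotes it from \cite{Specialfunctions}, so there is no internal argument to compare with. Your route is the standard proof of this fact. You combine the derivative formula \eqref{F(a,b;c;x)prop_2} with Euler's transformation to get $F(a+1,b+1;c+1;x)=(1-x)^{c-a-b-1}F(c-a,c-b;c+1;x)>0$ on $(0,1)$. It also yields strict monotonicity whenever $ab\neq 0$, which is slightly more than the statement asks for.
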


    \begin{lemma}{\rm(See \cite{km})} Let $u_{\alpha,\beta}(z)$ be the $(\alpha,\beta)$-harmonic function  defined in \eqref{u_alpha_beta}. Then
\label{partial derivative of K}
\begin{align*}
\begin{split}
\partial_z u_{\alpha,\beta}(z)=& \left(\frac{(\alpha+1)(1-\overline{z})}{(1-z)\left(1-|z|^2\right)}-\frac{\beta\overline{z}}{1-|z|^2}\right)u_{\alpha,\beta}(z) \\=& \frac{(\alpha + 1)\left(1-|z|^2\right)^{\alpha+\beta}}{(1-z)^{\alpha+2}(1-\overline{z})^\beta} - \frac{\beta \overline{z} \left(1-|z|^2\right)^{\alpha+\beta}}{(1-z)^{\alpha+1}(1-\overline{z})^{\beta+1}}
\end{split}
\end{align*}
and
\begin{align*}
\begin{split}
\partial_{\overline{z}} u_{\alpha,\beta}(z)=& \left(\frac{(\beta+1)(1-z)}{(1-\overline{z})(1-|z|^2)}-\frac{\alpha z}{1-|z|^2}\right)u_{\alpha,\beta}(z)\\=& \frac{(\beta + 1)\left(1-|z|^2\right)^{\alpha+\beta}}{(1-z)^{\alpha}(1-\overline{z})^{\beta+2}} - \frac{\alpha z \left(1-|z|^2\right)^{\alpha+\beta}}{(1-z)^{\alpha+1}(1-\overline{z})^{\beta+1}}.
\end{split}
\end{align*}
\end{lemma}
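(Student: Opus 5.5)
This formula is a direct computation, so I will differentiate $u_{\alpha,\beta}$ term by term. I will treat the two factors $(1-|z|^2)^{\alpha+\beta+1}=(1-z\overline{z})^{\alpha+\beta+1}$ and $(1-z)^{-(\alpha+1)}(1-\overline{z})^{-(\beta+1)}$ separately, using principal branches. Since $\mathrm{Re}(1-z)>0$ and $\mathrm{Re}(1-\overline{z})>0$ on $\mathbb{D}$, the powers are holomorphic (respectively antiholomorphic) there and the chain rule applies.

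For $\partial_z$, the factor $(1-\overline{z})^{-(\beta+1)}$ is antiholomorphic, so its $\partial_z$-derivative vanishes. The other two factors give
\[
\partial_z (1-z\overline{z})^{\alpha+\beta+1}=-(\alpha+\beta+1)\overline{z}(1-|z|^2)^{\alpha+\beta}
\]
and
\[
\partial_z(1-z)^{-(\alpha+1)}=(\alpha+1)(1-z)^{-(\alpha+2)}.
\]
Taking the logarithmic derivative, I get
\[
\frac{\partial_z u_{\alpha,\beta}}{u_{\alpha,\beta}}=\frac{\alpha+1}{1-z}-\frac{(\alpha+\beta+1)\overline{z}}{1-|z|^2}.
\]
The key algebraic step is to rewrite this in the stated form. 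I split $(\alpha+\beta+1)\overline{z}=(\alpha+1)\overline{z}+\beta\overline{z}$ and use
\[
\frac{1}{1-z}-\frac{\overline{z}}{1-|z|^2}=\frac{1-z\overline{z}-\overline{z}(1-z)}{(1-z)(1-|z|^2)}=\frac{1-\overline{z}}{(1-z)(1-|z|^2)}.
\]
This yields the first displayed identity. Multiplying by $u_{\alpha,\beta}$ then gives the second expression: the exponent of $(1-|z|^2)$ drops by one, the term $(1-\overline z)/(1-z)$ shifts the exponents to $\alpha+2$ and $\beta$, and the $\beta\overline z$ term keeps the exponents $\alpha+1$, $\beta+1$.

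The $\partial_{\overline z}$ formula follows by the symmetric computation, interchanging the roles $z\leftrightarrow\overline z$ and $\alpha\leftrightarrow\beta$. Equivalently, I can apply the first result to $u_{\beta,\alpha}(\overline{z})$ and use $\partial_{\overline z}[g(\overline z)]=(\partial_\zeta g)(\overline z)$.

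The only delicate point is the branch issue. I have to make sure the power identities $(1-z)^{-(\alpha+1)}\cdot(1-z)^{-1}=(1-z)^{-(\alpha+2)}$ and $(1-\overline{z})^{-(\beta+1)}(1-\overline{z})=(1-\overline{z})^{-\beta}$ hold for complex exponents. This is fine because all bases lie in the right half-plane, where the principal logarithm is additive for these integer shifts.
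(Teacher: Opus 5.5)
Your proof is correct. The logarithmic derivative $\partial_z u_{\alpha,\beta}/u_{\alpha,\beta} = \frac{\alpha+1}{1-z} - \frac{(\alpha+\beta+1)\overline{z}}{1-|z|^2}$, combined with $\frac{1}{1-z}-\frac{\overline z}{1-|z|^2}=\frac{1-\overline z}{(1-z)(1-|z|^2)}$, yields both displayed forms. Your reduction of the $\partial_{\overline z}$ case via $u_{\alpha,\beta}(z)=u_{\beta,\alpha}(\overline z)$ and $\partial_{\overline z}[g(\overline z)]=(\partial_\zeta g)(\overline z)$ is also valid for non-holomorphic $g$, since $\partial_{\overline z} z=0$.

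The paper gives no proof and simply quotes the lemma from the literature; your direct differentiation is the standard argument behind it. One small remark: the integer-shift exponent identities hold for any fixed branch of the logarithm. The right-half-plane observation is only needed to make the powers holomorphic (respectively antiholomorphic) on $\mathbb{D}$.
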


\begin{lemma}{\rm (See \cite{Specialfunctions})}\label{lem4}
	It holds that $$\int_0^{\pi}\frac{dt}{(1+r^2-2r\cos t)^v}=\pi F\left(v,v;1;r^2\right),$$ where $F(a,b;c;x)$ is the Gauss hypergeometric function.
	\end{lemma}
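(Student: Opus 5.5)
The plan is to write the integrand as the squared modulus of a power of an analytic function and then apply Parseval's identity. Fix $0\le r<1$ and $v\in\mathbb{C}$, with powers taken via principal branches. Since $1+r^2-2r\cos t=\left|1-re^{it}\right|^2=(1-re^{it})(1-re^{-it})$ and $\mbox{Re}\,(1-re^{\pm it})>0$, the principal branches give
\[
(1+r^2-2r\cos t)^{-v}=(1-re^{it})^{-v}(1-re^{-it})^{-v}.
\]
The base on the left is a positive real number, so no branch ambiguity arises, and this identity is consistent for all $t$.

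Next I expand each factor by the binomial series:
\[
(1-\zeta)^{-v}=\sum_{n=0}^{\infty}\frac{(v)_n}{n!}\zeta^n \qquad (|\zeta|<1).
\]
I apply this with $\zeta=re^{it}$ and $\zeta=re^{-it}$. Because $r<1$, both series converge absolutely and uniformly in $t$; the coefficients grow at most polynomially in $n$, which is dominated by $r^n$.

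Multiplying the two series and integrating term by term over $[0,2\pi]$ is then justified. Orthogonality $\frac{1}{2\pi}\int_0^{2\pi}e^{i(n-m)t}\,dt=\delta_{nm}$ kills all off-diagonal terms, which gives
\[
\int_0^{2\pi}\frac{dt}{(1+r^2-2r\cos t)^v}=2\pi\sum_{n=0}^{\infty}\frac{(v)_n(v)_n}{(1)_n\,n!}r^{2n}=2\pi F\left(v,v;1;r^2\right),
\]
using $(1)_n=n!$.

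Finally, the integrand depends only on $\cos t$, which is invariant under $t\mapsto 2\pi-t$. Hence the integral over $[\pi,2\pi]$ equals the integral over $[0,\pi]$, and dividing by two yields $\pi F(v,v;1;r^2)$.

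The only point needing care is the legitimacy of the branch factorization and of exchanging sum and integral. Both are handled by the positivity of $\mbox{Re}\,(1-re^{\pm it})$ and by uniform absolute convergence for $r<1$. The value at $r=0$ is trivially $\pi$, consistent with $F(v,v;1;0)=1$.
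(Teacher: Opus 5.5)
Your proof is correct and complete. The paper gives no proof of this lemma; it only quotes it from the special-functions reference. Your argument is therefore a self-contained substitute for that citation rather than an alternative to anything in the text. You factor the base as $(1-re^{it})(1-re^{-it})$, expand both principal powers binomially, multiply, integrate using orthogonality, and then halve by the symmetry $t\mapsto 2\pi-t$.

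Compared with the bare citation, your route gives two things. First, it is valid for every $v\in\mathbb{C}$. This matters because the computation of $I_1$ in Theorem~\ref{bound_of_w_z} applies the lemma with $v=1-\frac{(\mathrm{Re}(\alpha+\beta)+2)q}{2}$, which can be negative (e.g.\ $v=1-q$ when $\alpha=\beta=0$). Second, it yields the full-circle identity $\int_0^{2\pi}(1+r^2-2r\cos t)^{-v}\,dt=2\pi F(v,v;1;r^2)$ directly. That is the form the paper actually has in hand before it folds the integral onto $[0,\pi]$. Combined with Euler's transformation $F(a,b;c;x)=(1-x)^{c-a-b}F(c-a,c-b;c;x)$, the same identity also gives the value of the integral $I$ in Theorem~\ref{Integral mean thm}, which the paper takes from a different source.

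One wording point: for non-real $v$ the integrand is not literally a squared modulus, since $\overline{(1-re^{it})^{-v}}=(1-re^{-it})^{-\bar v}$. Your computation never uses that description, only the product of two absolutely convergent series and orthogonality, so nothing breaks. For real $v$ it is exactly Parseval's identity.
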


    \begin{lemma}{\rm(See \cite{Schwarz_for_pq})}{\label{Bound_of_u_alpha_beta}}
	Let $u_{\alpha,\beta}$ be as in \eqref{u_alpha_beta} for some $\alpha,\beta \in \mathbb{C}$ and $k,l$ are non-negative integers. Then
	\[
	\left| \partial^{k} \bar{\partial}^{\,l} u_{\alpha,\beta}(z) \right|
	\leq
	C_{\alpha,\beta,k,l}\,
	\frac{|u_{\alpha,\beta}(z)|}{(1-|z|^2)^{k+l}},
	\]
	where $C_{\alpha,\beta,k,l}$ is a  constant that depends on $\alpha,\beta,k\text{ and }l.$
	\end{lemma}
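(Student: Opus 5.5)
The plan is to prove a structural statement by induction on $k+l$: every mixed derivative $\partial^k\bar\partial^{\,l}u_{\alpha,\beta}$ equals $u_{\alpha,\beta}$ times a finite sum of elementary terms, each singular of order at most $k+l$. The bound then follows from one elementary geometric estimate.

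Set $s=\alpha+\beta+1$. On $\mathbb{D}$ we have
\[
\partial \log u_{\alpha,\beta}=\frac{\alpha+1}{1-z}-\frac{s\,\overline{z}}{1-|z|^2},\qquad
\bar\partial \log u_{\alpha,\beta}=\frac{\beta+1}{1-\overline{z}}-\frac{s\,z}{1-|z|^2}.
\]
This is just a rewriting of Lemma~\ref{partial derivative of K}; the logarithm is understood via the principal branches used to define the complex powers, and only its derivatives are needed. Call a function $T$ an \emph{admissible term of weight $n$} if
\[
T=c\,z^{i}\overline{z}^{\,j}\,(1-|z|^2)^{-m}(1-z)^{-p}(1-\overline{z})^{-q}
\]
with $c\in\mathbb{C}$, $i,j,m,p,q\ge 0$ integers and $m+p+q\le n$.

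\textbf{Claim.} For all $k,l\ge 0$,
\[
\partial^k\bar\partial^{\,l}u_{\alpha,\beta}=u_{\alpha,\beta}\sum_{\nu} T_\nu ,
\]
a finite sum of admissible terms of weight $k+l$. The number of terms and the coefficients depend only on $\alpha,\beta,k,l$.

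The proof is by induction. The case $k=l=0$ is trivial. For the inductive step, apply $\partial$ to $u_{\alpha,\beta}T$, where $T$ has weight $n$:
\[
\partial(u_{\alpha,\beta}T)=u_{\alpha,\beta}\bigl(T\,\partial\log u_{\alpha,\beta}+\partial T\bigr).
\]
The term $T\,\partial\log u_{\alpha,\beta}$ is a sum of two admissible terms of weight $n+1$. For $\partial T$, use the product rule together with:
\begin{itemize}
\item $\partial (1-|z|^2)^{-m}=m\,\overline{z}\,(1-|z|^2)^{-m-1}$, which raises $m$ by one;
\item $\partial(1-z)^{-p}=p(1-z)^{-p-1}$, which raises $p$ by one;
\item $\partial z^i=iz^{i-1}$, which leaves the weight unchanged;
\item $\partial\overline{z}^{\,j}=0$ and $\partial(1-\overline{z})^{-q}=0$.
\end{itemize}
So every resulting term is admissible of weight $n+1$. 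The case of $\bar\partial$ is symmetric, and since $\partial$ and $\bar\partial$ commute on $C^\infty$ functions, the order of differentiation does not matter.

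To finish, note that for $z\in\mathbb{D}$,
\[
|1-z|\ge 1-|z|\ge \tfrac12(1-|z|^2),
\]
and likewise for $|1-\overline{z}|$. Together with $|z|<1$, this bounds each admissible term of weight $k+l$ by
\[
|c|\,2^{p+q}(1-|z|^2)^{-(m+p+q)}\le |c|\,2^{k+l}(1-|z|^2)^{-(k+l)}.
\]
The last inequality uses $m+p+q\le k+l$ and $1-|z|^2\le 1$. Summing over the finitely many terms gives the lemma with $C_{\alpha,\beta,k,l}=2^{k+l}\sum_\nu|c_\nu|$.

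The only delicate point is the bookkeeping in the induction. One must check that differentiating the monomial factors $z^i\overline{z}^{\,j}$ never raises the singular weight, and that the set of terms stays finite with coefficients depending only on $(\alpha,\beta,k,l)$. This is why the definition allows weight $\le n$ rather than exactly $n$.
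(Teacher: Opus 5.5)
Your proof is correct and complete. The paper gives no argument for this lemma and simply imports it from the cited reference, so your induction serves as a self-contained replacement. Writing $u_{\alpha,\beta}=e^{L}$ with $L$ smooth, every $\partial^k\bar\partial^{\,l}u_{\alpha,\beta}$ equals $u_{\alpha,\beta}$ times finitely many terms $c\,z^i\bar z^{\,j}(1-|z|^2)^{-m}(1-z)^{-p}(1-\bar z)^{-q}$ with $m+p+q\le k+l$. The estimate $|1-z|\ge 1-|z|\ge\frac12(1-|z|^2)$ then finishes the proof for all $\alpha,\beta\in\mathbb{C}$, with the explicit constant $2^{k+l}\sum_\nu|c_\nu|$.

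What your route gives up is the size of the constant. Already for $(k,l)=(1,0)$, you split $\partial L$ into $(\alpha+1)/(1-z)$ and $-(\alpha+\beta+1)\bar z/(1-|z|^2)$ and bound $|1-z|^{-1}$ by $2(1-|z|^2)^{-1}$; this yields at best $2|\alpha+1|+|\alpha+\beta+1|$. The grouping in Lemma~\ref{partial derivative of K} keeps the unimodular factor $(1-\bar z)/(1-z)$ and yields $|\alpha+1|+|\beta|$. That constant equals $1$, and is attained, for the classical Poisson kernel.

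This does not affect the lemma, which only asserts the existence of some $C_{\alpha,\beta,k,l}$, and Theorem~\ref{t3} uses it only in that form. The sharper first-order constants of Theorem~\ref{bound_of_w_z} come from Lemma~\ref{partial derivative of K}, not from this lemma.
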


	\begin{lemma} {\rm (See \cite{samyVurinen,YangChu})}\label{seriesconverge}
	Let $r_n$ and $s_n$ $(n=0,1,2,\ldots)$ be real numbers, and let the power series
		\[
		R(x)=\sum_{n=0}^{\infty} r_n x^n
		\ \ \text{and} \ \
		S(x)=\sum_{n=0}^{\infty} s_n x^n
		\]
		be convergent for $|x|<r\, (r>0)$ with $s_n>0$ for all $n$.
		If the non-constant sequence $\{r_n/s_n\}$ is increasing (decreasing) for all $n$,
		then the function $x \mapsto R(x)/S(x)$ is strictly increasing (resp.\ decreasing)
		on $(0,r)$.
	\end{lemma}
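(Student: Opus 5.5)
This is Lemma~\ref{seriesconverge}, the monotonicity criterion for ratios of power series. I would prove it by differentiating the quotient and symmetrizing the resulting double series. Both series converge on $|x|<r$ and $S(x)>0$ for $x\in(0,r)$, since every $s_n>0$. So $Q=R/S$ is differentiable there, with
\[
S(x)^2\,Q'(x)=R'(x)S(x)-R(x)S'(x)=\sum_{n,m\ge 0}(n-m)\,r_n s_m\,x^{n+m-1},
\]
where the Cauchy product is justified by absolute convergence inside the disk of convergence. It then suffices to show that the right-hand side is strictly positive (resp.\ negative) on $(0,r)$.

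Next, symmetrize by swapping the roles of $n$ and $m$ and averaging the two expressions:
\[
\sum_{n,m}(n-m)r_ns_m x^{n+m-1}=\frac12\sum_{n,m}(n-m)\,(r_ns_m-r_ms_n)\,x^{n+m-1}=\frac12\sum_{n,m}(n-m)\,s_ns_m\Bigl(\frac{r_n}{s_n}-\frac{r_m}{s_m}\Bigr)x^{n+m-1}.
\]
If $\{r_n/s_n\}$ is increasing, then for $n>m$ the factors $n-m$ and $r_n/s_n-r_m/s_m$ are both nonnegative. For $n<m$ both are nonpositive. Since $s_ns_m>0$, every term is $\ge 0$ for $x\in(0,r)$.

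The only delicate point is strictness. Because the sequence is non-constant and monotone, there are indices $m<n$ with $r_n/s_n\neq r_m/s_m$. The corresponding term $(n-m)s_ns_m(r_n/s_n-r_m/s_m)x^{n+m-1}$ is then strictly positive for $x>0$. Hence $Q'>0$ on $(0,r)$, and $Q$ is strictly increasing there. The decreasing case follows by applying the same argument to $-R$, or by reversing all the inequalities.

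The main obstacle is the rearrangement of the double series. I would handle it by noting that both $\sum |r_n||x|^n$ and $\sum n|s_n||x|^{n-1}$ converge for $|x|<r$, so the double series converges absolutely and can be reordered freely.
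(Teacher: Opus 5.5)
Your proof is correct and complete. The Cauchy-product formula for $R'S-RS'$ holds, and the symmetrization turns each term into $\tfrac12(n-m)s_ns_m\bigl(r_n/s_n-r_m/s_m\bigr)x^{n+m-1}\ge 0$. Strictness follows from a single pair $m<n$ with distinct ratios, and the absolute-convergence justification for reordering is valid. The paper gives no proof of its own; it only cites Ponnusamy--Vuorinen and Yang--Chu. Your argument is essentially the classical Biernacki--Krzy\.z proof used in those sources, which groups the same double sum by total degree $N$ and pairs the indices $k$ and $N-k$.
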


	\vskip .05in
	\section{Integral means of $(\alpha,\beta)$-harmonic functions}

    Our first main theorem gives the sharp $L^p$ integral mean bound for $(\alpha,\beta)$-harmonic functions solving the Dirichlet problem \eqref{Dirichlet_problem}, which is the foundational estimate for all subsequent results in this section.
	
	\begin{theorem}\label{Integral mean thm}
		Suppose that $\alpha, \beta\in \mathbb{C}\setminus\mathbb{Z}^-$ and $\mbox{\rm Re}(\alpha+\beta) > -1$. Let $w(z)=K_{\alpha,\beta}[f](z)$ be an $(\alpha,\beta)$-harmonic function defined on  $\mathbb{D}$ and satisfying condition \eqref{Dirichlet_problem} with $f\in L^p(\mathbb{T}),\,1\leq p<\infty$. Then for $z=re^{i\theta}\in \mathbb{D}$, the following sharp inequality holds:
		\begin{align}\begin{split}\label{integral_mean_bound}
			 M_p(r,w)\leq& |c_{\alpha,\beta}|\exp\left(\frac{\pi}{2}| \text{\rm Im}(\alpha-\beta)|\right)\\&\quad \times F\left(-\frac{\mbox{\rm Re}(\alpha+\beta)}{2},-\frac{\mbox{\rm Re}(\alpha+\beta)}{2};1;r^2\right)\|f\|_{L^p(\mathbb{T})},
            \end{split}
		\end{align} where $c_{\alpha,\beta}$ is given by \eqref{23}.
		 Moreover, $$M_p(r,w)\leq \exp\left(\frac{\pi}{2}|\text{\rm Im}(\alpha-\beta)|\right)B(\alpha,\beta)\|f\|_{L^p(\mathbb{T})},$$ where  $$B(\alpha,\beta):=\frac{\Gamma(\mbox{\rm Re}(\alpha+\beta)+1)}{\Gamma^2\left(\frac{\mbox{\rm Re}(\alpha+\beta)}{2}+1\right)}.$$
	\end{theorem}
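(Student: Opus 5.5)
We estimate $M_p(r,w)$ by Minkowski's integral inequality (or, equivalently, Jensen's inequality from Section 2 applied with the probability measure $|K_{\alpha,\beta}(re^{i(\theta-t)})|\,dt/\int|K|$), which reduces the problem to computing the $L^1$ norm of the kernel on the circle of radius $r$. Write $w(re^{i\theta})=\frac{1}{2\pi}\int_0^{2\pi}K_{\alpha,\beta}(re^{i(\theta-t)})f(e^{it})\,dt$ by Lemma~\ref{Poisson_integral_Rep}. Then Minkowski gives
\[
M_p(r,w)\le \Bigl(\frac{1}{2\pi}\int_0^{2\pi}\bigl|K_{\alpha,\beta}(re^{is})\bigr|\,ds\Bigr)\|f\|_{L^p(\mathbb{T})},
\]
because translation in $\theta$ preserves the $L^p$ norm of $f$. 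The case $f\in L^p$ rather than continuous is handled by the same integral representation, which makes sense for $f\in L^p\subset L^1$.

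The key step is a pointwise bound for $|K_{\alpha,\beta}(z)|$. Put $\alpha=a_1+ia_2$ and $\beta=b_1+ib_2$. For $z=re^{is}$ we have $1-z=|1-z|e^{i\phi}$ with $|\phi|<\pi/2$, and $1-\overline z=|1-z|e^{-i\phi}$. Hence
\[
|(1-z)^{\alpha+1}|=|1-z|^{a_1+1}e^{-a_2\phi},\qquad |(1-\overline z)^{\beta+1}|=|1-z|^{b_1+1}e^{b_2\phi}.
\]
The product is therefore $|1-z|^{a_1+b_1+2}e^{(b_2-a_2)\phi}$. Since $|\phi|<\pi/2$, the factor satisfies $e^{(a_2-b_2)\phi}\le \exp(\frac{\pi}{2}|\mathrm{Im}(\alpha-\beta)|)$. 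Moreover $|(1-r^2)^{\alpha+\beta+1}|=(1-r^2)^{\mathrm{Re}(\alpha+\beta)+1}$. Writing $\sigma=\mathrm{Re}(\alpha+\beta)$, we get
\[
|K_{\alpha,\beta}(re^{is})|\le |c_{\alpha,\beta}|e^{\frac{\pi}{2}|\mathrm{Im}(\alpha-\beta)|}\frac{(1-r^2)^{\sigma+1}}{(1+r^2-2r\cos s)^{\sigma/2+1}}.
\]

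Integrating this bound with Lemma~\ref{lem4} (taking $v=\sigma/2+1$ and using symmetry over $[0,2\pi]$) gives $(1-r^2)^{\sigma+1}F(\sigma/2+1,\sigma/2+1;1;r^2)$. Euler's transformation $F(a,b;c;x)=(1-x)^{c-a-b}F(c-a,c-b;c;x)$, with $c-a-b=-\sigma-1$, turns this into $F(-\sigma/2,-\sigma/2;1;r^2)$, which is \eqref{integral_mean_bound}. For the second bound we use $\sigma>-1$. When $\sigma\ge 0$, Lemma~\ref{Lemma1} (with $ab\ge0$, $a,b\le 1$) shows the function is increasing, and by \eqref{F(a,b;c;x)prop_1} its limit at $1$ is $\Gamma(1+\sigma)/\Gamma^2(1+\sigma/2)$. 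Multiplying by $|c_{\alpha,\beta}|$ gives $B(\alpha,\beta)$, interpreting the stated constant as absorbing the factor $|c_{\alpha,\beta}|$ (this needs checking against the intended normalization). When $-1<\sigma<0$, all series coefficients are positive, so the function is again increasing with the same finite limit.

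Sharpness follows by taking $\alpha,\beta$ real with $\alpha=\beta$, so that the kernel is positive, and $f\equiv1$ with $p=1$ (or an approximate identity concentrated at a point). Then equality holds in Minkowski's inequality and the kernel estimate is exact, since $\phi$ contributes nothing. I expect the main obstacle to be the argument bookkeeping for complex powers: choosing principal branches so that $|\arg(1-z)|<\pi/2$, and tracking the sign of $\mathrm{Im}$ so that the exponential factor comes out exactly as $\exp(\frac{\pi}{2}|\mathrm{Im}(\alpha-\beta)|)$.
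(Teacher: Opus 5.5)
Your proof follows the paper's route. Both reduce $M_p(r,w)$ to the $L^1$ norm of the kernel on $|z|=r$; your Minkowski step is the paper's Jensen--Fubini step. Both then bound $|K_{\alpha,\beta}(re^{is})|$ by $|c_{\alpha,\beta}|\exp(\frac{\pi}{2}|\mathrm{Im}(\alpha-\beta)|)(1-r^2)^{\sigma+1}|1-re^{is}|^{-\sigma-2}$, evaluate the integral as $|c_{\alpha,\beta}|F(-\frac{\sigma}{2},-\frac{\sigma}{2};1;r^2)$, and finish with monotonicity and Gauss's formula. The only difference is that you derive the kernel estimate (from $|\arg(1-z)|<\pi/2$) and the integral (from Lemma~\ref{lem4} plus Euler's transformation) yourself, whereas the paper quotes both from the literature.

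Your doubt about the constant in the \emph{Moreover} part is justified, and you should not settle it by ``interpretation''. Your argument, and equally the paper's, proves only
\[
M_p(r,w)\le |c_{\alpha,\beta}|\exp\Bigl(\frac{\pi}{2}|\mathrm{Im}(\alpha-\beta)|\Bigr)B(\alpha,\beta)\|f\|_{L^p(\mathbb{T})}.
\]
The paper obtains the factor $|c_{\alpha,\beta}|$ and then silently drops it in its last line. That factor cannot be removed. For $\alpha=-\frac12$, $\beta=\frac12$ and $f\equiv1$ we have $B(\alpha,\beta)=1$ and $\mathrm{Im}(\alpha-\beta)=0$, but $w(0)=c_{\alpha,\beta}=\Gamma(\frac12)\Gamma(\frac32)=\frac{\pi}{2}$, so $M_p(0,w)=\frac{\pi}{2}>1$. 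Your sentence ``multiplying by $|c_{\alpha,\beta}|$ gives $B(\alpha,\beta)$'' should therefore say that it gives $|c_{\alpha,\beta}|B(\alpha,\beta)$.

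For sharpness, your choice of real $\alpha=\beta$ is the right one. With $f\equiv1$ one gets $w=c_{\alpha,\beta}F(-\alpha,-\beta;1;|z|^2)$. This coincides with $c_{\alpha,\beta}F(-\frac{\sigma}{2},-\frac{\sigma}{2};1;|z|^2)$ in that case, but not in general under the paper's weaker hypothesis $\mathrm{Im}\,\alpha=\mathrm{Im}\,\beta$. In your case equality then holds for every $p\in[1,\infty)$, not only for $p=1$.
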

	\begin{proof}
		Let  $w(z)=K_{\alpha,\beta}[f](z)$ be an $(\alpha,\beta)$-harmonic function defined on  $\mathbb{D}$.
		For $z=re^{i\theta}$, it follows from \eqref{poisson_integral} in Lemma \ref{Poisson_integral_Rep} that
		\begin{equation}\label{Thm1eq1}
			|w(z)|\leq  \frac{1}{2\pi}\int_0^{2\pi}\abs{c_{\alpha,\beta}}\abs{\frac{\left(1 - |z|^2\right)^{\alpha + \beta + 1}}{(1 - ze^{-it})^{\alpha+1} (1 - \overline{z}e^{it})^{\beta+1}}}\abs{f\left(e^{it}\right)}dt .
		\end{equation}
		By \cite[Theorem 6.4]{KlintbergOlofsson}, we obtain  \begin{equation}\label{WVB_3.3}
        \bigg|\frac{\left(1 - |z|^2\right)^{\alpha + \beta + 1}}{(1 - z)^{\alpha+1} (1 - \overline{z})^{\beta+1}}\bigg|\leq \exp\left(\frac{\pi}{2}| \text{\rm Im}(\alpha-\beta)|\right)\frac{\left(1-|z|^2\right)^{\text{\rm Re}(\alpha+\beta)+1}}{|1-z|^{\text{\rm Re}(\alpha+\beta)+2}}.
        \end{equation}
		Suppose that $$I=\frac{1}{2\pi}\int_0^{2\pi}|c_{\alpha,\beta}|\frac{(1-r^2)^{\text{\rm Re}(\alpha+\beta)+1}}{|1-re^{i(\theta-t)}|^{\text{\rm Re}(\alpha+\beta)+2}} d\theta.$$
		By \cite[Theorem 2.1]{PLiSamyLuo}, we get
		$$I=|c_{\alpha,\beta}|F\left(-\frac{\mbox{Re}(\alpha+\beta)}{2},-\frac{\mbox{Re}(\alpha+\beta)}{2};1;r^2\right).$$  Moreover, by Lemma \ref{Lemma1} and \eqref{F(a,b;c;x)prop_1}, we have
		\begin{align*}
        \begin{split}
		|c_{\alpha,\beta}|&F\bigg(-\frac{\mbox{Re}(\alpha+\beta)}{2},-\frac{\mbox{Re}(\alpha+\beta)}{2};1;r^2\bigg)\\\leq&	\lim_{r \to 1}|c_{\alpha,\beta}|F\left(-\frac{\mbox{Re}(\alpha+\beta)}{2},-\frac{\mbox{Re}(\alpha+\beta)}{2};1;r^2\right)\\=&|c_{\alpha,\beta}|\frac{\Gamma\left(\mbox{Re}(\alpha+\beta)+1\right)}{\Gamma^2\left(\frac{\mbox{Re}(\alpha+\beta)}{2}+1\right)}.
        \end{split}
		\end{align*}
		
		\noindent For $1\leq p<\infty$, by Jensen's inequality, we obtain
		\begin{align*}
        \begin{split}
			&|w(z)|^p\\\leq&\left(\exp\left(\frac{\pi}{2}| \text{Im}(\alpha-\beta)|\right)\right)^p\left(\frac{I}{2\pi} \int_{0}^{2\pi}\frac{|c_{\alpha,\beta}|\frac{(1-r^2)^{\text{Re}(\alpha+\beta)+1}}{|1-re^{i(\theta-t)}|^{\text{Re}(\alpha+\beta)+2}}}{I}|f(e^{it})| dt\right)^p\\\leq&\left(\exp\left(\frac{\pi}{2}| \text{Im}(\alpha-\beta)|\right)\right)^p \frac{I^{p-1}}{2\pi}\int_{0}^{2\pi}|c_{\alpha,\beta}|\frac{(1-r^2)^{\text{Re}(\alpha+\beta)+1}}{|1-re^{i(\theta-t)}|^{\text{Re}(\alpha+\beta)+2}}|f(e^{it})|^p dt.
     \end{split}
		\end{align*}
Integrating on both sides of the above inequality and using Fubini's theorem, we get
		\begin{align*}
        \begin{split}
			&\frac{1}{2\pi}\int_0^{2\pi}|w(e^{i\theta})|^p d\theta\\\leq & \left(\exp\left(\frac{\pi}{2}| \text{Im}(\alpha-\beta)|\right)\right)^p\\&\quad\times\frac{I^{p-1}}{2\pi}\int_{0}^{2\pi}\left(\frac{1}{2\pi}\int_{0}^{2\pi}|c_{\alpha,\beta}|\frac{(1-r^2)^{\text{Re}(\alpha+\beta)+1}}{|1-re^{i(\theta-t)}|^{\text{Re}(\alpha+\beta)+2}}|f(e^{it})|^p dt \right)d\theta\\\leq& \left(\exp\left(\frac{\pi}{2}| \text{Im}(\alpha-\beta)|\right)\right)^pI^p\|f\|^p_{L^p(\mathbb{T})}.
            \end{split}
		\end{align*}
		Thus, we find that
		\begin{align*}
        \begin{split}
			 M_p(r,w)\leq& \exp\left(\frac{\pi}{2}|\text{Im}(\alpha-\beta)|\right)I\|f\|_{L^p(\mathbb{T})}\\=&\exp\left(\frac{\pi}{2}| \text{Im}(\alpha-\beta)|\right)|c_{\alpha,\beta}|\\&\quad\times F\left(-\frac{\mbox{Re}(\alpha+\beta)}{2},-\frac{\mbox{Re}(\alpha+\beta)}{2};1;r^2\right)\|f\|_{L^p(\mathbb{T})}\\\leq& \exp\left(\frac{\pi}{2}| \text{Im}(\alpha-\beta)|\right)B(\alpha,\beta)\|f\|_{L^p(\mathbb{T})}.     
        \end{split}
		\end{align*}
        
		To prove the sharpness, suppose that $\alpha,\beta\in\mathbb{C}\setminus\mathbb{Z}^-$ satisfy $\mbox{Re}(\alpha+\beta)>-1$ and $\mbox{Im}(\alpha)=\mbox{Im}(\beta)$, $f:\mathbb{T}\rightarrow\mathbb{C}$ is defined by $f(e^{it})\equiv 1$. Then $$w_0(z)= c_{\alpha,\beta}F\left(-\frac{\mbox{Re}(\alpha+\beta)}{2},-\frac{\mbox{Re}(\alpha+\beta)}{2};1;r^2\right),$$ which shows that $w_0(z)$ is $(\alpha,\beta)$-harmonic.
		By observing that
		\begin{align*}
        \begin{split}
		&M_p(r,w_0)\\=&\bigg(\frac{1}{2\pi} \int_0^{2\pi}|w_0(z)|^p d\theta\bigg)^\frac{1}{p}\\=&\bigg(\frac{1}{2\pi} \int_0^{2\pi}\left(|c_{\alpha,\beta}|F\left(-\frac{\mbox{Re}(\alpha+\beta)}{2},-\frac{\mbox{Re}(\alpha+\beta)}{2};1;r^2\right)^p d\theta\right)^\frac{1}{p}\\=&|c_{\alpha,\beta}|F\left(-\frac{\mbox{Re}(\alpha+\beta)}{2},-\frac{\mbox{Re}(\alpha+\beta)}{2};1;r^2\right)\|f\|_{L^p(\mathbb{T})},
        \end{split}
		\end{align*}
		 we deduce that \eqref{integral_mean_bound} is sharp.
	\end{proof}
	As an immediate consequence of Theorem \ref{Integral mean thm}, we get the integral mean of real kernel $\alpha$-harmonic functions due to Long \cite[Theorem 1.1]{optimal_inequalities_alpha_harmonic}.
	\begin{cor}
		Let $w(z)$ be a real kernel $\alpha$-harmonic function with boundary function $f\in L^p(\mathbb{T}) , 1\leq p< \infty$, and $z=re^{i\theta}\in \ID$. Then the sharp inequality 
		$$M_p(r,w)\leq c_{\alpha}F\left(-\frac{\alpha}{2},-\frac{\alpha}{2};1;r^2\right)\|f\|_{L^p(\mathbb{T})}$$
	holds. 
	\end{cor}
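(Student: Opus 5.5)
The corollary follows by specializing Theorem \ref{Integral mean thm} to the real-kernel case. A real kernel $\alpha$-harmonic function is, by the convention recalled in the introduction, a $(\frac{\alpha}{2},\frac{\alpha}{2})$-harmonic function with $\alpha>-1$. I will therefore apply Theorem \ref{Integral mean thm} with both parameters equal to $\frac{\alpha}{2}$, writing $w=K_{\frac{\alpha}{2},\frac{\alpha}{2}}[f]$ via Lemma \ref{Poisson_integral_Rep}.

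\textbf{Hypotheses.} Since $\alpha>-1$, the number $\frac{\alpha}{2}>-\frac12$ is real and not a negative integer. Also $\mbox{Re}(\frac{\alpha}{2}+\frac{\alpha}{2})=\alpha>-1$, so all hypotheses of Theorem \ref{Integral mean thm} hold.

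\textbf{Simplifying the bound.} Substituting into \eqref{integral_mean_bound}, the exponential factor is $\exp(\frac{\pi}{2}|\mbox{Im}(0)|)=1$. The hypergeometric factor becomes $F(-\frac{\alpha}{2},-\frac{\alpha}{2};1;r^2)$. The constant is
\[
c_{\frac{\alpha}{2},\frac{\alpha}{2}}=\frac{\Gamma^2(\frac{\alpha}{2}+1)}{\Gamma(\alpha+1)},
\]
which is positive because $\frac{\alpha}{2}+1>0$ and $\alpha+1>0$, so its absolute value equals itself. This constant is the $c_\alpha$ of the real-kernel $\alpha$-harmonic Poisson kernel in Long's normalization; I only need to check that the two notations agree. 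This gives the stated inequality.

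\textbf{Sharpness.} I take $f\equiv 1$, exactly as in the proof of Theorem \ref{Integral mean thm}. The condition $\mbox{Im}(\frac{\alpha}{2})=\mbox{Im}(\frac{\alpha}{2})$ holds trivially. The same computation gives $w_0(z)=c_\alpha F(-\frac{\alpha}{2},-\frac{\alpha}{2};1;r^2)$ and $\|f\|_{L^p(\mathbb{T})}=1$, so equality is attained.

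The only step needing any care is matching the constant $c_\alpha$ with $c_{\frac{\alpha}{2},\frac{\alpha}{2}}$ from \eqref{23}. Everything else is direct substitution into Theorem \ref{Integral mean thm}.
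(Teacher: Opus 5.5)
Your proposal is correct and follows the paper's route: the paper also obtains the corollary by specializing Theorem \ref{Integral mean thm} to the parameters $(\frac{\alpha}{2},\frac{\alpha}{2})$, where the exponential factor becomes $1$ and the hypergeometric factor becomes $F(-\frac{\alpha}{2},-\frac{\alpha}{2};1;r^2)$, with sharpness from $f\equiv 1$. The identification $c_\alpha=c_{\frac{\alpha}{2},\frac{\alpha}{2}}=\Gamma^2(\frac{\alpha}{2}+1)/\Gamma(\alpha+1)$ that you flag holds automatically: both constants are fixed by requiring $K[1]\to 1$ on $\mathbb{T}$, which by \eqref{F(a,b;c;x)prop_1} means $c\cdot\Gamma(\alpha+1)/\Gamma^2(\frac{\alpha}{2}+1)=1$.
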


    Now, we estimate the modulus of the first-order partial derivatives of $(\alpha,\beta)$-harmonic functions $w(z)$ in terms of the $L^p$-norm of the boundary function $f$.
	\begin{theorem}\label{bound_of_w_z}
		Let $w(z)=K_{\alpha,\beta}[f](z)$ be an $(\alpha,\beta)$-harmonic function with {$\text{\rm Re}(\alpha+\beta)>-1$} defined on  $\mathbb{D}$  with $f\in L^p(\mathbb{T}),1\leq p< \infty$ and $z=re^{i\theta}\in \ID$. Then
		\begin{itemize}
			\item [(i)]there exists a function $C_{\alpha,\beta,p}(r)$ and a constant $C_{\alpha,\beta,p}$ such that 
			\begin{align*}
            \begin{split}
				\abs{\frac{\partial}{\partial z}w(z)}&\leq \frac{C_{\alpha,\beta,p}(r)}{(1-r^2)^{1+1/p}}\exp\left(\frac{\pi}{2}|\text{\rm Im}(\alpha-\beta)|\right)\|f\|_{L^p(\mathbb{T})}\\&\leq \frac{C_{\alpha,\beta,p}}{(1-r^2)^{1+1/p}}\exp\left(\frac{\pi}{2}|\text{\rm Im}(\alpha-\beta)|\right)\|f\|_{L^p(\mathbb{T})},
                \end{split}
			\end{align*}
			where
			\begin{equation*}
				C_{\alpha,\beta,p}(r)=|c_{\alpha,\beta}|(|\alpha +1 |+|\beta| r)F_1
			\end{equation*}
            with $$F_1=\left(F\left(1-\frac{(\text{\rm Re}(\alpha+\beta)+2)q}{2},
            1-\frac{(\text{\rm Re}(\alpha+\beta)+2)q}{2};1;r^2\right)\right)^{1/q},$$
            and 
			\begin{equation*}
				C_{\alpha,\beta,p}=|c_{\alpha,\beta}|(|\alpha +1 |+|\beta|)\left(\frac{\Gamma((\text{\rm Re}(\alpha+\beta)+2)q-1)}{\Gamma^2\left(\dfrac{(\text{\rm Re}(\alpha+\beta)+2)q}{2}\right)}\right)^{1/q}.
			\end{equation*}
			The constant $C_{\alpha,\beta,p}$ is asymptotically sharp as $\alpha,\beta \rightarrow 0$.
			
			\item[(ii)] there exists a function $D_{\alpha,\beta,p}(r)$ and a constant $D_{\alpha,\beta,p}$ such that 
			\begin{align*}
            \begin{split}
				\bigg|\frac{\partial}{\partial \overline{z}}w(z)\bigg|&\leq \frac{D_{\alpha,\beta,p}(r)}{(1-r^2)^{1+1/p}}\exp\left(\frac{\pi}{2}|\text{\rm Im}(\alpha-\beta)|\right)\|f\|_{L^p(\mathbb{T})}\\&\leq \frac{D_{\alpha,\beta,p}}{(1-r^2)^{1+1/p}}\exp\left(\frac{\pi}{2}|\text{\rm Im}(\alpha-\beta)|\right)\|f\|_{L^p(\mathbb{T})},
                 \end{split}
			\end{align*}
			where $$D_{\alpha,\beta,p}(r)=|c_{\alpha,\beta}|(|\beta +1 |+|\alpha| r)F_1$$ and $$D_{\alpha,\beta,p}=|c_{\alpha,\beta}|(|\beta +1 |+|\alpha|)\left(\frac{\Gamma((\text{\rm Re}(\alpha+\beta)+2)q-1)}{\Gamma^2\left(\dfrac{(\text{\rm Re}(\alpha+\beta)+2)q}{2}\right)}\right)^{1/q}.$$
			The constant $D_{\alpha,\beta,p}$ is asymptotically sharp as $\alpha,\beta \rightarrow 0$.
		\end{itemize}
	\end{theorem}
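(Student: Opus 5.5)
Differentiate under the integral sign in \eqref{poisson_integral}. This gives
\[
\frac{\partial}{\partial z}w(z)=\frac{1}{2\pi}\int_0^{2\pi} c_{\alpha,\beta}\, e^{-it}\,(\partial_z u_{\alpha,\beta})(ze^{-it})\, f(e^{it})\,dt,
\]
and similarly for $\partial_{\overline z}w$ with the factor $e^{it}$. Into this we insert the explicit formula of Lemma \ref{partial derivative of K}. Writing $\zeta=ze^{-it}$, the first expression there reads
\[
\partial_\zeta u_{\alpha,\beta}(\zeta)=\frac{u_{\alpha,\beta}(\zeta)}{1-|\zeta|^2}\left((\alpha+1)\frac{1-\overline\zeta}{1-\zeta}-\beta\overline\zeta\right).
\]
Since $|(1-\overline\zeta)/(1-\zeta)|=1$ and $|\overline\zeta|=r$, the bracket has modulus at most $|\alpha+1|+|\beta|r$. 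Then \eqref{WVB_3.3} bounds $|u_{\alpha,\beta}(\zeta)|$, and we obtain the pointwise kernel bound
\[
|c_{\alpha,\beta}\,\partial_\zeta u_{\alpha,\beta}(\zeta)|\le |c_{\alpha,\beta}|(|\alpha+1|+|\beta|r)\exp\left(\frac{\pi}{2}|\text{\rm Im}(\alpha-\beta)|\right)\frac{(1-r^2)^{\text{\rm Re}(\alpha+\beta)}}{|1-re^{i(\theta-t)}|^{\text{\rm Re}(\alpha+\beta)+2}}.
\]
For $\partial_{\overline z}$ the same argument uses the second formula in Lemma \ref{partial derivative of K}, with $(\beta+1)$ and $\alpha$ in place of $(\alpha+1)$ and $\beta$.

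Next apply Hölder's inequality with the conjugate exponent $q=p/(p-1)$. When $p=1$ we have $q=\infty$, and we simply take the supremum of the kernel, which is attained at $t=\theta$. This gives
\[
\left|\frac{\partial w}{\partial z}\right|\le (\ldots)(1-r^2)^{\text{\rm Re}(\alpha+\beta)}\left(\frac{1}{2\pi}\int_0^{2\pi}\frac{dt}{|1-re^{it}|^{(\text{\rm Re}(\alpha+\beta)+2)q}}\right)^{1/q}\|f\|_{L^p(\mathbb{T})}.
\]
Set $v=(\text{\rm Re}(\alpha+\beta)+2)q/2$. Lemma \ref{lem4} evaluates the integral as $F(v,v;1;r^2)$. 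Euler's transformation $F(a,b;c;x)=(1-x)^{c-a-b}F(c-a,c-b;c;x)$ then turns this into $(1-r^2)^{1-2v}F(1-v,1-v;1;r^2)$. Taking the $1/q$ power gives
\[
(1-r^2)^{1/q-\text{\rm Re}(\alpha+\beta)-2}F_1.
\]
Combined with the factor $(1-r^2)^{\text{\rm Re}(\alpha+\beta)}$, the total power of $1-r^2$ is $1/q-2=-1-1/p$. This yields the first inequality in (i) with $C_{\alpha,\beta,p}(r)$. Part (ii) is identical.

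For the uniform constant, replace $|\beta|r$ by $|\beta|$. The function $F(1-v,1-v;1;x)$ is increasing on $(0,1)$ by Lemma \ref{Lemma1}, because $(1-v)^2\ge0$ and $1-v\le 1$ since $v>1/2$. We then need a finite limit at $x=1$ from \eqref{F(a,b;c;x)prop_1}. This requires $c-a-b=2v-1>0$, which holds since $v>1/2$ because $\text{\rm Re}(\alpha+\beta)>-1$ and $q\ge1$. The limit is $\Gamma(2v-1)/\Gamma(v)^2$, which matches the stated $C_{\alpha,\beta,p}$ and $D_{\alpha,\beta,p}$.

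The main obstacle is the claimed asymptotic sharpness as $\alpha,\beta\to0$. I would compare with the known sharp harmonic estimate, where $c_{0,0}=1$ and the constant is $(\Gamma(2q-1)/\Gamma(q)^2)^{1/q}$. To get extremals, I would take boundary data $f$ that realize equality in Hölder's inequality, namely $f(e^{it})\propto |K'(re^{i(\theta-t)})|^{q-1}$ times a phase, and let $r\to1$. Continuity of the constant in $(\alpha,\beta)$ then shows the ratio of the bound to the attained value tends to $1$. This step is where care is needed, since the triangle inequality in the bracket is only asymptotically tight.
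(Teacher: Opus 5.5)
Your proposal is correct and is essentially the paper's proof. You differentiate under the integral, bound the kernel derivative by $(|\alpha+1|+|\beta|r)$ (resp.\ $|\beta+1|+|\alpha|r$) times the majorant in \eqref{WVB_3.3}, apply H\"older, evaluate the integral hypergeometrically, and get the uniform constant from Lemma \ref{Lemma1} and Gauss's summation formula. The only differences are cosmetic: you reach $(1-r^2)^{1-2v}F(1-v,1-v;1;r^2)$ via Lemma \ref{lem4} with exponent $v$ plus Euler's transformation, whereas the paper uses the M\"obius substitution $e^{i(t-\theta)}=(r-e^{is})/(1-re^{is})$ and then Lemma \ref{lem4} with exponent $1-v$; and for sharpness your H\"older-extremal data at $\alpha=\beta=0$ (where $|\partial_z K_{0,0}(ze^{-it})|=|1-ze^{-it}|^{-2}$ exactly, so nothing is lost) is a perfectly good substitute for the paper's test functions $f_\rho$.
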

	\begin{proof}
		Suppose that $w$ is an $(\alpha,\beta)$-harmonic function on $\mathbb{D}$  with $f\in L^p(\mathbb{T}),\, 1\leq p< \infty$. 
		Differentiating on both sides of \eqref{poisson_integral} with respect to $z$, we obtain 
					$$\frac{\partial}{\partial z}w(z)=\frac{1}{2\pi} \int_0^{2\pi} \frac{\partial}{\partial z}K_{\alpha,\beta}\left(z e^{-it}\right) f\left(e^{it}\right) dt.$$
		It follows that 
		\begin{equation}\label{3.3}
		\bigg|\frac{\partial}{\partial z}w(z)\bigg|\leq \frac{1}{2\pi} \int_0^{2\pi}\bigg|\frac{\partial}{\partial z}K_{\alpha,\beta}\left(z e^{-it}\right)\bigg|\abs{f\left(e^{it}\right)}dt.
		\end{equation}
		By Lemma \ref{partial derivative of K} and chain rule, we have
		\begin{align*}
        \begin{split}
			&\bigg|\frac{\partial}{\partial z}K_{\alpha,\beta}\left(z e^{-it}\right)\bigg|\\=&|c_{\alpha,\beta}|\bigg|\frac{(\alpha+1)(1-\overline{z}e^{it})}{(1-ze^{-it})(1-|z|^2)}-\frac{\beta\overline{z}e^{it}}{1-|z|^2}\bigg||u_{\alpha,\beta}|\\\leq&|c_{\alpha,\beta}|\exp\left(\frac{\pi}{2}|\text{\rm Im}(\alpha-\beta)|\right)\frac{|\alpha+1|+|\beta||\overline{z}|}{1-|z|^2}\frac{(1-|z|^2)^{\text{\rm Re}(\alpha+\beta)+1}}{|1-re^{i(\theta-t)}|^{\text{\rm Re}(\alpha+\beta)+2}}\\=& |c_{\alpha,\beta}|\exp\left(\frac{\pi}{2}|\text{\rm Im}(\alpha-\beta)|\right)(|\alpha+1|+|\beta||\overline{z}|)\frac{(1-|z|^2)^{\text{Re}(\alpha+\beta)}}{|1-re^{i(\theta-t)}|^{\text{\rm Re}(\alpha+\beta)+2}}.
            \end{split}
		\end{align*} 
		It follows from \eqref{3.3} that
	\begin{align*}
    \begin{split}
		&\bigg|\frac{\partial}{\partial z}w(z)\bigg|\\\leq& \frac{1}{2\pi}\exp\left(\frac{\pi}{2}|\text{\rm Im}(\alpha-\beta)|\right) \int_0^{2\pi}|c_{\alpha,\beta}|(|\alpha +1 |+|\beta| |\overline{z}|)\frac{(1-|z|^2)^{\text{Re}(\alpha+\beta)}}{|1-ze^{-it}|^{\text{Re}(\alpha+\beta)+2}}\abs{f\left(e^{it}\right)}dt\\=& \frac{|c_{\alpha,\beta}|}{2\pi}(|\alpha +1 |+|\beta| |\overline{z}|)\exp\left(\frac{\pi}{2}|\text{\rm Im}(\alpha-\beta)|\right) \int_0^{2\pi}\frac{(1-|z|^2)^{\text{Re}(\alpha+\beta)}}{|1-ze^{-it}|^{\text{Re}(\alpha+\beta)+2}}\abs{f\left(e^{it}\right)} dt\\\leq& |c_{\alpha,\beta}|(|\alpha +1 |+|\beta| |\overline{z}|)\exp\left(\frac{\pi}{2}|\text{\rm Im}(\alpha-\beta)|\right)\left(\frac{1}{2\pi}\int_0^{2\pi}\frac{(1-|z|^2)^{(\text{Re}(\alpha+\beta))q}}{|1-ze^{-it}|^{(\text{Re}(\alpha+\beta)+2)q}}dt\right)^{1/q}\\&\times\left(\frac{1}{2\pi}\int_0^{2\pi}\abs{f\left(e^{it}\right)}^pdt\right)^{1/p}.
        \end{split}
	\end{align*}
	The last inequality is due to H\"older's inequality for $\frac{1}{p}+\frac{1}{q}=1$	and $1<p\leq \infty$.
    
		For $z=re^{i\theta}$, we denote
		\begin{equation}\label{I1}
			I_1:=\int_0^{2\pi}\frac{(1-r^2)^{(\text{Re}(\alpha+\beta))q}}{|1-re^{i(\theta-t)}|^{(\text{Re}(\alpha+\beta)+2)q}}dt.
		\end{equation} 
		By the following variable changes $$e^{i(t-\theta)}=\frac{r-e^{is}}{1-re^{is}},$$ $$\abs{1-re^{i(\theta-t)}}=\frac{1-r^2}{\abs{1-re^{-is}}}$$ and $$dt=\frac{1-r^2}{|1-re^{is}|^2}ds,$$ combining \eqref{F(a,b;c;x)prop_1} with lemmas \ref{Lemma1} and \ref{lem4}, we find from \eqref{I1} that
		\begin{align*}
           \begin{split}
			I_1=&\int_0^{2\pi}\frac{(1-r^2)^{(\text{Re}(\alpha+\beta))q}}{\left(\dfrac{1-r^2}{|1-re^{-is}|}\right)^{(\text{Re}(\alpha+\beta)+2)q}}\frac{1-r^2}{|1-re^{is}|^2}ds\\=& \int_0^{2\pi}(1-r^2)^{1-2q}|1-re^{is}|^{(\text{Re}(\alpha+\beta)+2)q-2} ds\\=&(1-r^2)^{1-2q}\int_0^{2\pi}\big(|1-re^{is}|^2\big)^{\frac{(\text{Re}(\alpha+\beta)+2)q}{2}-1} ds\\=&(1-r^2)^{1-2q}\int_0^{2\pi}(1+r^2-2r\cos s)^{\frac{(\text{Re}(\alpha+\beta)+2)q}{2}-1} ds\\=&2(1-r^2)^{1-2q}\int_0^{\pi}(1+r^2-2r\cos s)^{\frac{(\text{Re}(\alpha+\beta)+2)q}{2}-1} ds\\=&2\pi(1-r^2)^{1-2q}F\bigg(1-\frac{(\text{Re}(\alpha+\beta)+2)q}{2},1-\frac{(\text{Re}(\alpha+\beta)+2)q}{2};1;r^2\bigg)\\\leq& 2\pi(1-r^2)^{1-2q} \frac{\Gamma((\text{Re}(\alpha+\beta)+2)q-1)}{\Gamma^2\left(\dfrac{(\text{Re}(\alpha+\beta)+2)q}{2}\right)}.   
            \end{split}
		\end{align*}
		Let $$C_{\alpha,\beta,p}(r)=|c_{\alpha,\beta}|(|\alpha +1 |+|\beta| r)F_1.$$
		Then 
		\begin{align*}
        \begin{split}
			\bigg|\frac{\partial}{\partial z}w(z)\bigg|\leq& |c_{\alpha,\beta}|(|\alpha +1 |+|\beta| r)\exp\left(\frac{\pi}{2}|\text{\rm Im}(\alpha-\beta)|\right)I_1^{1/q}\left(\int_0^{2\pi}\abs{f\left(e^{it}\right)}^pdt\right)^{1/p}\\\leq&\frac{C_{\alpha,\beta,p}(r)}{(1-r^2)^{1+1/p}}\exp\left(\frac{\pi}{2}|\text{\rm Im}(\alpha-\beta)|\right)\|f\|_{L^p(\mathbb{T})}.
            \end{split}
		\end{align*}
		Furthermore, we have $$\bigg|\frac{\partial}{\partial z}w(z)\bigg|\leq\frac{C_{\alpha,\beta,p}}{(1-r^2)^{1+1/p}}\exp\left(\frac{\pi}{2}|\text{\rm Im}(\alpha-\beta)|\right)\|f\|_{L^p(\mathbb{T})},$$ where $$C_{\alpha,\beta,p}=|c_{\alpha,\beta}|(|\alpha +1 |+|\beta| )\left(\frac{\Gamma((\text{Re}(\alpha+\beta)+2)q-1)}{\Gamma^2\bigg(\dfrac{(\text{Re}(\alpha+\beta)+2)q}{2}\bigg)}\right)^{1/q}.$$ If $\alpha=\beta=0$, then $$C_{\alpha,\beta,p}(r)=\left(F\left(1-q,1-q;1;r^2\right)\right)^{1/q}$$ and $$C_{0,0,p}=\bigg(\frac{\Gamma(2q-1)}{\Gamma^2(q)}\bigg)^{1/q}.$$
        
 		Now we will show that the constant $C_{0,0,p}$ is sharp. For $0<\rho<1$, consider the function 
		\begin{equation*}
			f_\rho\left(e^{is}\right)=\left(1-\rho^2\right)^{\frac{2}{1-p}}(1+\cos s)^{\frac{1}{p-1}}e^{is}. 
		\end{equation*} 
		Take $$w(z)=K_{\alpha,\beta}[f_\rho](z).$$
		By using the similar method as in \cite{optimal_inequalities_alpha_harmonic}, we find that $$\lim\limits_{r=\rho\rightarrow1}\frac{(1-r^2)^{1+1/p}|w_z|}{\|f\|_{L^p(\mathbb{T})}}=C_{0,0,p}.$$
		This means that the constant $C_{0,0,p}$ is sharp.
        
        Similarly, we have
		$$\frac{\partial}{\partial \overline{z}}w(z)=\frac{1}{2\pi} \int_0^{2\pi} \frac{\partial}{\partial \overline{z}}K_{\alpha,\beta}\left(z e^{-it}\right) f\left(e^{it}\right) dt,$$
		it follows that 
		\begin{equation*}
			\bigg|\frac{\partial}{\partial \overline{z}}w(z)\bigg|\leq \frac{1}{2\pi} \int_0^{2\pi}\bigg|\frac{\partial}{\partial \overline{z}}K_{\alpha,\beta}\left(z e^{-it}\right)\bigg|\abs{f\left(e^{it}\right)}dt.
		\end{equation*}
		Moreover, by Lemma \ref{partial derivative of K} and chain rule, we obtain
		\begin{align*}
        \begin{split}
			&\bigg|\frac{\partial}{\partial \overline{z}}K_{\alpha,\beta}\left(z e^{-it}\right)\bigg|\\ \leq &|c_{\alpha,\beta}|(|\beta +1 |+|\alpha| |z|)\exp\left(\frac{\pi}{2}|\text{\rm Im}(\alpha-\beta)|\right)\frac{(1-|z|^2)^{\text{Re}(\alpha+\beta)}}{|1-ze^{-it}|^{\text{Re}(\alpha+\beta)+2}}.
            \end{split}
		\end{align*}
		The sharpness of the constant $D_{0,0,p}$ can be verified by taking
		\[
		g_\rho\left(e^{is}\right):=\left(1-\rho^2\right)^{\frac{2}{1-p}} (1+\cos s)^{\frac{1}{p-1}} e^{-is}.
		\]
	\end{proof}

In what follows, we generalize the results of Theorems \ref{Integral mean thm} and \ref{bound_of_w_z} to all higher-order partial derivatives $\partial^{k} \bar{\partial}^{\,l} w(z)$ for non-negative integers $k,l$. 
	
	\begin{theorem}\label{t3}
		Suppose that $\alpha, \beta\in \mathbb{C}\setminus\mathbb{Z}^-,\, \mbox{\rm Re}(\alpha+\beta)>-1$. Let $w(z)=K_{\alpha,\beta}[f](z)$ be an $(\alpha,\beta)$-harmonic function defined on $\mathbb{D}$ satisfying \eqref{Dirichlet_problem} with $f\in L^p(\mathbb{T}),\, 1\leq p< \infty$, then for $z=re^{i\theta}\in \mathbb{D}$, the following  inequality holds:		
		\begin{align*}
        \begin{split}
			&M_p\left(r, \partial^{k} \bar{\partial}^{\,l} w(z)\right)\\\leq&\frac{|c_{\alpha,\beta}|C_{\alpha,\beta,k,l}}{(1-|z|^2)^{k+l}}\exp\left(\frac{\pi}{2}| \text{\rm Im}(\alpha-\beta)|\right) F\left(-\frac{\mbox{\rm Re}(\alpha+\beta)}{2},-\frac{\mbox{\rm Re}(\alpha+\beta)}{2};1;r^2\right)\|f\|_{L^p(\mathbb{T})},
            \end{split}
		\end{align*}
        where $C_{\alpha,\beta,k,l}$ is defined as in Lemma \ref{Bound_of_u_alpha_beta}.
	\end{theorem}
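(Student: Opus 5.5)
The plan is to copy the proof of Theorem \ref{Integral mean thm}, replacing the kernel by its derivative $\partial^k\bar\partial^{\,l}K_{\alpha,\beta}$. First I will differentiate under the integral in \eqref{poisson_integral}. This is legitimate because for fixed $z\in\ID$ the kernel is smooth in $z$, uniformly in $t$ on compact subsets of $\ID$. The variable is $\zeta=ze^{-it}$, and $\partial_z$, $\partial_{\bar z}$ act on $\zeta$ by the chain rule, contributing only the unimodular factors $e^{-ikt}e^{ilt}$. This gives
\[
\partial^k\bar\partial^{\,l}w(z)=\frac{1}{2\pi}\int_0^{2\pi}e^{-i(k-l)t}\,c_{\alpha,\beta}\,(\partial^k\bar\partial^{\,l}u_{\alpha,\beta})\left(ze^{-it}\right)f\left(e^{it}\right)dt .
\]

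Next I will invoke Lemma \ref{Bound_of_u_alpha_beta} at the point $ze^{-it}$. Since $|ze^{-it}|=|z|=r$, this bounds the differentiated kernel by
\[
\frac{C_{\alpha,\beta,k,l}}{(1-r^2)^{k+l}}\,|u_{\alpha,\beta}(ze^{-it})|.
\]
Then \eqref{WVB_3.3} bounds $|u_{\alpha,\beta}(ze^{-it})|$ by
\[
\exp\left(\frac{\pi}{2}|\mathrm{Im}(\alpha-\beta)|\right)P(\theta-t),\qquad P(\theta-t)=\frac{(1-r^2)^{\mathrm{Re}(\alpha+\beta)+1}}{|1-re^{i(\theta-t)}|^{\mathrm{Re}(\alpha+\beta)+2}} .
\]
Combining these, $|\partial^k\bar\partial^{\,l}w(re^{i\theta})|$ is at most
\[
\frac{|c_{\alpha,\beta}|C_{\alpha,\beta,k,l}}{(1-r^2)^{k+l}}\exp\left(\frac{\pi}{2}|\mathrm{Im}(\alpha-\beta)|\right)\frac{1}{2\pi}\int_0^{2\pi}P(\theta-t)|f(e^{it})|\,dt .
\]
This is exactly the pointwise estimate \eqref{Thm1eq1} from the proof of Theorem \ref{Integral mean thm}, multiplied by the constant factor $C_{\alpha,\beta,k,l}(1-r^2)^{-(k+l)}$. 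That factor depends only on $r$, not on $\theta$ or $t$.

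From there I will repeat the argument of Theorem \ref{Integral mean thm} verbatim. Normalize $P$ by
\[
I=\frac{1}{2\pi}\int_0^{2\pi}P(s)\,ds=F\left(-\frac{\mathrm{Re}(\alpha+\beta)}{2},-\frac{\mathrm{Re}(\alpha+\beta)}{2};1;r^2\right),
\]
using \cite[Theorem 2.1]{PLiSamyLuo}. Apply Jensen's inequality with the convex function $x^p$ to the probability measure $P(\theta-t)\,dt/(2\pi I)$. Then integrate in $\theta$ and use Fubini, noting that $\int P(\theta-t)\,d\theta=2\pi I$ as well. This gives $M_p^p\le(\text{constant})^p I^p\|f\|^p_{L^p}$, and taking $p$-th roots yields the stated inequality with $(1-|z|^2)^{k+l}=(1-r^2)^{k+l}$.

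The only real obstacle is the first step. I must make sure that Lemma \ref{Bound_of_u_alpha_beta}, stated for $u_{\alpha,\beta}(z)$, transfers correctly to $u_{\alpha,\beta}(ze^{-it})$ with derivatives taken in $z$. This holds by the chain-rule observation above together with the rotation invariance of $1-|z|^2$. Beyond that I need a justification for differentiating under the integral sign, which is routine for $f\in L^p\subset L^1$ and $z$ in a compact subset of $\ID$. No sharpness claim is made, so nothing further is needed.
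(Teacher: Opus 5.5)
Your proposal is correct and follows essentially the same route as the paper. Both differentiate under the Poisson integral, bound the differentiated kernel via Lemma \ref{Bound_of_u_alpha_beta} and \eqref{WVB_3.3}, normalize by $I=F\left(-\tfrac{\mathrm{Re}(\alpha+\beta)}{2},-\tfrac{\mathrm{Re}(\alpha+\beta)}{2};1;r^2\right)$, and conclude with Jensen's inequality and Fubini's theorem; your write-up is in fact slightly more careful than the paper's, since you keep the factor $|f(e^{it})|$, the argument $ze^{-it}$ and the unimodular chain-rule factor, all of which the paper's displayed chain of inequalities drops.
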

	\begin{proof}
		Let  $w(z)=K_{\alpha,\beta}[f](z)$ be an $(\alpha,\beta)$-harmonic function defined on  $\mathbb{D}$. 
		It follows from Lemma \ref{Bound_of_u_alpha_beta} and \eqref{poisson_integral} that
		\begin{align*}
        \begin{split}
			&\left| \partial^{k} \bar{\partial}^{\,l} w(z) \right|
			\\=&
			\left|
			\frac{c_{\alpha,\beta}}{2\pi}
			\int_{0}^{2\pi}
			\partial^{k} \bar{\partial}^{\,l} u_{\alpha,\beta}(ze^{-it})\,
			f\left(e^{it}\right)dt
			\right|\\\leq&\frac{|c_{\alpha,\beta}|}{2\pi}
			\int_{0}^{2\pi}
			\abs{\partial^{k} \bar{\partial}^{\,l} u_{\alpha,\beta}(ze^{-it})}\,
			\abs{f\left(e^{it}\right)}dt\\\leq&\frac{|c_{\alpha,\beta}|}{2\pi}
			\int_{0}^{2\pi}C_{\alpha,\beta,k,l}\,
			\frac{|u_{\alpha,\beta}(ze^{-it})|}{(1-|z|^2)^{k+l}}dt\\\leq & \frac{|c_{\alpha,\beta}|}{2\pi}
			\frac{C_{\alpha,\beta,k,l}}{(1-|z|^2)^{k+l}}\int_{0}^{2\pi}\exp\left(\frac{\pi}{2}|\text{\rm Im}(\alpha-\beta)|\right)\frac{(1-|z|^2)^{\text{Re}(\alpha+\beta)+1}}{|1-z|^{\text{Re}(\alpha+\beta)+2}}dt.
            \end{split}
		\end{align*}
		Let $$I_2:=\frac{1}{2\pi}\int_0^{2\pi}\frac{(1-r^2)^{\text{Re}(\alpha+\beta)+1}}{|1-re^{i(\theta-t)}|^{\text{Re}(\alpha+\beta)+2}} d\theta.$$
		By Theorem \ref{Integral mean thm}, we obtain
		$$I_2=F\left(-\frac{\mbox{Re}(\alpha+\beta)}{2},-\frac{\mbox{Re}(\alpha+\beta)}{2};1;r^2\right).$$
		For $1\leq p<\infty$, by Jensen's inequality, we have
		\begin{align*}
        \begin{split}
			\left| \partial^{k} \bar{\partial}^{\,l} w(z) \right|^p \leq &
			 \left(\frac{|c_{\alpha,\beta}|C_{\alpha,\beta,k,l}}{2\pi}\right)^p \frac{\exp\left(\frac{p\pi}{2}| \text{\rm Im}(\alpha-\beta)|\right)I_2^{p-1}}{(1-|z|^2)^{p(k+l)}}\\&\quad\times\int_{0}^{2\pi}\frac{(1-r^2)^{\text{Re}(\alpha+\beta)+1}}{|1-re^{i(\theta-t)}|^{\text{Re}(\alpha+\beta)+2}}|f(e^{it})|^p dt.
            \end{split}
		\end{align*}
		Integrating on both sides of the above inequaltiy and using Fubini's theorem, we get
		\begin{eqnarray*}
			\frac{1}{2\pi} \int_0^{2\pi}\left| \partial^{k} \bar{\partial}^{\,l} w(z) \right|^p d\theta\leq \left(\frac{|c_{\alpha,\beta}|C_{\alpha,\beta,k,l}}{(1-|z|^2)^{k+l}}\right)^p \left(\exp\left(\frac{\pi}{2}|\text{\rm Im}(\alpha-\beta)|\right)\right)^pI_2^p\|f\|^p_{L^p(\mathbb{T})}.
		\end{eqnarray*}
		Therefore, we have
		\begin{align*}
        \begin{split}
			&M_p\left(r, \partial^{k} \bar{\partial}^{\,l} w(z)\right)\\\leq& \frac{|c_{\alpha,\beta}|C_{\alpha,\beta,k,l}}{(1-|z|^2)^{k+l}}\exp\left(\frac{\pi}{2}|\text{\rm Im}(\alpha-\beta)|\right)I_2\cdot\|f\|_{L^p(\mathbb{T})}\\=&\frac{|c_{\alpha,\beta}|C_{\alpha,\beta,k,l}}{(1-|z|^2)^{k+l}}\exp\left(\frac{\pi}{2}|\text{\rm Im}(\alpha-\beta)|\right)
            \\&\quad\times F\left(-\frac{\mbox{Re}(\alpha+\beta)}{2},-\frac{\mbox{Re}(\alpha+\beta)}{2};1;r^2\right)\|f\|_{L^p(\mathbb{T})}\\\leq& \frac{|c_{\alpha,\beta}|C_{\alpha,\beta,k,l}}{(1-|z|^2)^{k+l}}\exp\left(\frac{\pi}{2}|\text{\rm Im}(\alpha-\beta)|\right)B(\alpha,\beta)\|f\|_{L^p(\mathbb{T})}.
            \end{split}
		\end{align*}
    The proof of Theorem \ref{t3} is thus completed.
	\end{proof}
    \begin{rem}
Lemma \ref{Bound_of_u_alpha_beta} gives a pointwise modulus bound for the higher-order partial derivatives of the canonical $(\alpha, \beta)$-harmonic function $u_{\alpha, \beta}(z)$ with a constant depending only on $\alpha, \beta, k, l$, while Theorem \ref{t3} further derives the global $L^p$ integral mean estimates for such derivatives of $(\alpha, \beta)$-harmonic functions.
    \end{rem}

	\section{Properties of $(\alpha,\beta)$-harmonic functions}

    In this section, we apply the integral mean and derivative estimates established in Section 3 to derive additional key properties of $(\alpha,\beta)$-harmonic functions. We first recall the series expansion of $(\alpha,\beta)$-harmonic functions due to Klintberg and Olofsson \cite{KlintbergOlofsson}, then use this expansion to prove that a natural differential operator preserves the $(\alpha,\beta)$-harmonic class. We then establish the subharmonicity of $(\alpha,\beta)$-harmonic functions for positive real parameters $\alpha,\beta$. 
    
	
	\begin{thm} \label{thmB}
		Let $\alpha,\beta\in\mathbb{C}$. Then $u$ is an $(\alpha,\beta)$-harmonic function if and only if
		it has the form
		\begin{equation}\label{series of (alpha,beta)}
			\begin{aligned}
				u(z)
				&= \sum_{m=0}^{\infty}
				c_m\,F(-\alpha,m-\beta; m+1; |z|^2)\,z^m  \\
				&\quad\ \ +\sum_{m=1}^{\infty}
				c_{-m}\,F(-\beta,m-\alpha; m+1; |z|^2)\,\bar z^{\,m}
			\end{aligned}
			\end{equation}
		for some sequence $\{c_m\}_{m=-\infty}^{\infty}$ of complex numbers such that
		\begin{equation}\label{convergence of (alpha,beta)}
			\limsup_{|m|\to\infty} |c_m|^{1/|m|} \leq 1.
		\end{equation}
		Here $F$ denotes the hypergeometric function.
		Moreover, the sums in \eqref{series of (alpha,beta)} are absolutely convergent in the space
		$C^{\infty}(\mathbb{D})$ whenever \eqref{convergence of (alpha,beta)} holds.
	\end{thm}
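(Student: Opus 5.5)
The natural route is to separate variables in polar form and reduce the equation $\Delta_{\alpha,\beta}u=0$ to a family of hypergeometric ODEs, one for each Fourier mode. Write $z=re^{i\theta}$ and expand a $C^2$ function $u$ on $\mathbb{D}$ in its Fourier series in $\theta$, $u(re^{i\theta})=\sum_{m\in\mathbb{Z}}u_m(r)e^{im\theta}$. The operator $\Delta_{\alpha,\beta}$ commutes with rotations $z\mapsto e^{i\phi}z$, since every term is rotation invariant. Hence $\Delta_{\alpha,\beta}u=0$ holds if and only if each mode $u_m(r)e^{im\theta}$ is itself $(\alpha,\beta)$-harmonic. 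To justify this, take the Fourier coefficient of $\Delta_{\alpha,\beta}u$ on each circle $|z|=r$ and integrate by parts in $\theta$; this is legitimate for $C^2$ functions.

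For $m\ge0$, write each mode as $z^m g(|z|^2)$ with $g$ a function of $x=|z|^2$. A direct computation uses
\[
\partial_z(z^m g)=mz^{m-1}g+z^m\bar z g', \qquad \partial_{\bar z}(z^m g)=z^{m+1}g',
\]
together with $\partial_z\partial_{\bar z}(z^mg)=z^m\bigl((m+1)g'+xg''\bigr)$. Substituting into $\Delta_{\alpha,\beta}$ and dividing by $z^m$, I expect to obtain
\[
x(1-x)g''+\bigl[(m+1)-(1-\alpha-\beta)x\bigr]g'-\alpha(\beta-m)\cdot(\ldots)g=0,
\]
where the coefficients should match Gauss's equation with $a=-\alpha$, $b=m-\beta$, $c=m+1$. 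I will check this carefully: the zeroth-order term is $\alpha m-\alpha\beta=-\alpha(\beta-m)=ab$, and the $g'$ coefficient gives $a+b+1=m+1-\alpha-\beta$. Since $c=m+1$ is a positive integer, the solutions regular at $x=0$ form a one-dimensional space spanned by $F(-\alpha,m-\beta;m+1;x)$. The second Frobenius solution has a $\log x$ or $x^{-m}$ singularity, and combined with $z^m$ it would make $u_m$ fail to be $C^2$ at the origin; this needs a short argument, for instance using continuity of $u_m(r)=O(r^{|m|})$ near $0$. For $m<0$ the conjugate symmetry (the mode $\bar z^{|m|}h(|z|^2)$ leads to the same equation with $\alpha$ and $\beta$ swapped) gives $F(-\beta,|m|-\alpha;|m|+1;x)$. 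This yields the form \eqref{series of (alpha,beta)}. A small nuisance arises when $F(-\alpha,m-\beta;m+1;x)$ vanishes somewhere in $[0,1)$, but the coefficient $c_m$ is still determined by the behaviour near $r=0$.

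Growth condition. For the ``only if'' direction, fix $r<1$. The function $F_m(x)=F(-\alpha,m-\beta;m+1;x)$ converges to $1$ uniformly on $[0,r^2]$ as $m\to\infty$, because the $n$-th coefficient is bounded by $\prod_{j<n}\frac{(|\alpha|+j)(m+|\beta|+j)}{(m+1+j)(1+j)}$, which is $\le C_\alpha (1+\epsilon)^n$ for large $m$. So for large $|m|$ we get $|c_m|r^{|m|}\le 2\sup_\theta|u(re^{i\theta})|$. Letting $r\to1$ gives \eqref{convergence of (alpha,beta)}. Conversely, given \eqref{convergence of (alpha,beta)}, the same uniform bounds on $F_m$ and its derivatives, obtained via \eqref{F(a,b;c;x)prop_2}, show that on $|z|\le r$ the derivatives $\partial^k\bar\partial^l$ of the $m$-th term are $O\bigl(|m|^{k+l}(r')^{|m|}\bigr)$ for some $r'<1$. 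Hence the series converges absolutely in $C^\infty(\mathbb{D})$, and it is $(\alpha,\beta)$-harmonic term by term.

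I expect the main obstacle to be the rigorous exclusion of the second, singular solution at $x=0$ for each mode, together with the uniform-in-$m$ estimates for $F_m$ and its derivatives. For the latter I would bound the coefficients of the hypergeometric series directly, as above.
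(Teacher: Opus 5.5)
The paper does not prove this statement. It is quoted from Klintberg--Olofsson, so there is no in-paper argument to compare yours with. On its own terms, your route is the standard one and is sound in outline: rotation invariance, splitting into Fourier modes, reducing the $m$-th mode to Gauss's equation with $a=-\alpha$, $b=m-\beta$, $c=m+1$, discarding the singular solution, and recovering the growth of $c_m$ from $u_m(r)=c_m r^m F_m(r^2)$. Your converse direction is also fine. However, one step in the ``only if'' part is false as written.

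You claim that $F_m(x)=F(-\alpha,m-\beta;m+1;x)\to 1$ uniformly on $[0,r^2]$. This fails whenever $\alpha\neq 0$.
\begin{itemize}
\item For fixed $n$, $(m-\beta)_n/(m+1)_n\to 1$ as $m\to\infty$. So the $n$-th coefficient tends to $(-\alpha)_n/n!$, not to $0$, and the actual limit is $\sum_n \frac{(-\alpha)_n}{n!}x^n=(1-x)^{\alpha}$.
\item Concretely, for $\alpha=1$, $\beta=0$ you get $F(-1,m;m+1;x)=1-\frac{m}{m+1}x$. Then $|F_m(r^2)|<\frac12$ for $r^2>\frac12$ and $m$ large, and your inequality $|c_m|r^m\le 2\sup_\theta|u(re^{i\theta})|$ is false for the single mode $u(z)=z^mF_m(|z|^2)$.
\item Your coefficient bound $C_\alpha(1+\epsilon)^n$ only gives an \emph{upper} bound for $|F_m|$. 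This direction needs a \emph{lower} bound.
\end{itemize}
The repair is short. The same coefficient bound dominates the series on $[0,r^2]$ once $(1+\epsilon)r^2<1$, so by dominated convergence $F_m\to(1-x)^{\alpha}$ uniformly there. Since $|(1-x)^{\alpha}|=(1-x)^{\mathrm{Re}\,\alpha}>0$, you get $|F_m(r^2)|\ge \frac12(1-r^2)^{\mathrm{Re}\,\alpha}$ for large $m$. Hence $|c_m|r^m\le 2(1-r^2)^{-\mathrm{Re}\,\alpha}\sup_\theta|u(re^{i\theta})|$. The $r$-dependent factor disappears when you take $m$-th roots, let $m\to\infty$, and then let $r\to 1$. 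For the negative modes the limit is $(1-x)^{\beta}$.

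Smaller points:
\begin{itemize}
\item The zeroth-order coefficient $\alpha(m-\beta)$ equals $-ab$, not $ab$. That is exactly what Gauss's equation $x(1-x)g''+[c-(a+b+1)x]g'-ab\,g=0$ requires.
\item The $x$-coefficient in your first display should be $m+1-\alpha-\beta$.
\item The estimate $u_m(r)=O(r^{|m|})$ is not available a priori for a $C^2$ function. Mere boundedness of $u_m$ near $0$ is enough.
\item To exclude the second solution $y_2$ in all cases, including degenerate parameters where no logarithm appears, use Abel's formula: $W(y_1,y_2)=Cx^{-m-1}(1-x)^{\alpha+\beta}$ with $C\neq 0$. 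Since $y_1(0)=1$, you have $(y_2/y_1)'\sim Cx^{-m-1}$. So $y_2(x)\sim -\frac{C}{m}x^{-m}$ for $m\ge 1$, and $y_2(x)\sim C\log x$ for $m=0$. In both cases $r^m|y_2(r^2)|\to\infty$ as $r\to 0$, which is incompatible with boundedness of $u_m$.
\end{itemize}
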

   
	The linear operator $\mathcal{D}$ acting on harmonic functions is defined by
		\[
		\mathcal{D}
		:= z\frac{\partial}{\partial z}
		- \bar z \frac{\partial}{\partial \bar z},
		\] which plays an important role in fully starlike and convex mappings in $\D$.
        Now, we give some characterizations of $(\alpha,\beta)$-harmonic functions associated with the operator $\mathcal{D}$.
	
	
	\begin{prop}
    The operator $\mathcal{D}$ preserves $(\alpha,\beta)$-harmonic function.
	\end{prop}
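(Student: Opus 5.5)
Two routes are available: use the series expansion of Theorem \ref{thmB}, or commute $\mathcal{D}$ with $\Delta_{\alpha,\beta}$ directly. I will use the direct commutator computation, since it is short, and check it against the series.

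\emph{Commutator computation.} Write $\mathcal{D}=z\partial-\bar z\bar\partial$ and $\Delta_{\alpha,\beta}=(1-z\bar z)\partial\bar\partial+\alpha z\partial+\beta\bar z\bar\partial-\alpha\beta$. I will check that the commutator $[\mathcal{D},\Delta_{\alpha,\beta}]$ vanishes, term by term.

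\begin{itemize}
\item Since $[z\partial,z\partial]=0$ and $[\bar z\bar\partial,z\partial]=0$, the term $z\partial$ commutes with $\mathcal{D}$. Likewise $\bar z\bar\partial$ commutes with $\mathcal{D}$.
\item The constant term $-\alpha\beta$ commutes with $\mathcal{D}$ trivially.
\item For the principal term, use $[z\partial,\partial\bar\partial]=-\partial\bar\partial$ and $[\bar z\bar\partial,\partial\bar\partial]=-\partial\bar\partial$. Together these give $[\mathcal{D},\partial\bar\partial]=0$.
\item The multiplier satisfies $\mathcal{D}(1-z\bar z)=-z\bar z+z\bar z=0$.
\end{itemize}

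Combining the last two points, $[\mathcal{D},(1-z\bar z)\partial\bar\partial]=(\mathcal{D}(1-z\bar z))\,\partial\bar\partial+(1-z\bar z)[\mathcal{D},\partial\bar\partial]=0$. Hence $\Delta_{\alpha,\beta}(\mathcal{D}w)=\mathcal{D}(\Delta_{\alpha,\beta}w)=0$ whenever $w$ is $(\alpha,\beta)$-harmonic.

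\emph{Regularity.} Applying $\mathcal{D}$ requires $w\in C^3$, since $\mathcal{D}w$ must itself be $C^2$. This follows from Theorem \ref{thmB}, which gives $w\in C^\infty(\mathbb{D})$ because the series converges in $C^\infty(\mathbb{D})$.

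\emph{Series check.} I will also verify the result through Theorem \ref{thmB}. Set $g(|z|^2)=F(\cdot;|z|^2)$. Since $\mathcal{D}=-i\,\partial_\theta$ in polar coordinates, and $|z|^2$ is radial, $\mathcal{D}\big(g(|z|^2)z^m\big)=m\,g(|z|^2)z^m$. Similarly, $\mathcal{D}\big(g(|z|^2)\bar z^m\big)=-m\,g(|z|^2)\bar z^m$. Therefore $\mathcal{D}u$ again has the form \eqref{series of (alpha,beta)}, with coefficients $mc_m$ and $-mc_{-m}$. Because $\limsup |mc_m|^{1/|m|}=\limsup|c_m|^{1/|m|}\le1$, condition \eqref{convergence of (alpha,beta)} still holds. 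Termwise differentiation is justified by the $C^\infty$ convergence in Theorem \ref{thmB}. By that theorem, $\mathcal{D}u$ is $(\alpha,\beta)$-harmonic.

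No step is a real obstacle. The only points needing care are the sign bookkeeping in the commutator and the justification for differentiating the series termwise, which Theorem \ref{thmB} supplies.
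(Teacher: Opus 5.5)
Your proof is correct, but your main argument takes a different route from the paper's. The paper proves the proposition only through Theorem~\ref{thmB}. It differentiates the Klintberg--Olofsson series termwise and writes out $u_z$ and $u_{\bar z}$ explicitly. It then observes that the $F'$-terms cancel in $zu_z-\bar z u_{\bar z}$, and checks that the new coefficients $mc_m$, $-mc_{-m}$ still satisfy \eqref{convergence of (alpha,beta)}. That is exactly your ``series check''; your observation $\mathcal{D}=-i\,\partial_\theta$ makes the cancellation immediate instead of a computation.

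Your primary argument is the operator identity $[\mathcal{D},\Delta_{\alpha,\beta}]=0$, and your term-by-term verification is right. The Leibniz step $[\mathcal{D},g\,L]=(\mathcal{D}g)L+g[\mathcal{D},L]$ is legitimate because $\mathcal{D}$ is a first-order vector field.

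\textbf{What your route buys.} It needs only $w\in C^3$. You can get this from Theorem~\ref{thmB}, or from interior elliptic regularity, since $1-|z|^2>0$ in $\mathbb{D}$; either way you avoid the full two-sided characterization. It is also more conceptual. $\Delta_{\alpha,\beta}$ commutes with the rotations $w\mapsto w(e^{i\phi}z)$, and $\mathcal{D}$ is $-i$ times their infinitesimal generator, so the commutation is forced. As an operator identity it gives slightly more, for instance $\Delta_{\alpha,\beta}(\mathcal{D}w)=\mathcal{D}h$ whenever $\Delta_{\alpha,\beta}w=h$.

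\textbf{What the paper's route buys.} It shows that $\mathcal{D}$ acts diagonally on the functions $F(\cdot\,;|z|^2)z^m$ and $F(\cdot\,;|z|^2)\bar z^{m}$, with eigenvalues $m$ and $-m$. This gives the explicit action $c_m\mapsto mc_m$, $c_{-m}\mapsto -mc_{-m}$ on the coefficients.
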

	\begin{proof}
		Let $u$ be an $(\alpha,\beta)$-harmonic function with the series expansion. Then 
		
		\begin{align*}
        \begin{split}
		u_z&=\sum_{m=0}^{\infty}c_m\left[F'(-\alpha,m-\beta; m+1; |z|^2)\overline{z}z^m+F(-\alpha,m-\beta; m+1; |z|^2)mz^{m-1}\right]
		 \\& \quad +\sum_{m=1}^{\infty}
		c_{-m}\,F'(-\beta,m-\alpha; m+1; |z|^2)\bar z^{m+1}
        \end{split}
		\end{align*}  and 
		\begin{align*}
        \begin{split}
			u_{\overline{z}}&=\sum_{m=0}^{\infty}
			c_mF'\left(-\alpha,m-\beta; m+1; |z|^2\right)z^{m+1}\\& \quad+\sum_{m=1}^{\infty}c_{-m}\left[F'\left(-\beta,m-\alpha; m+1; |z|^2\right)z\overline{z}^m+F\left(-\beta,m-\alpha; m+1; |z|^2\right)m\overline{z}^{m-1}\right].
		\end{split}
		\end{align*}
		By direct computation, we have 
		\begin{align*}
        \begin{split}
			\mathcal{D}u&=zu_z-\overline{z}u_{\overline{z}}\\&=\sum_{m=0}^{\infty}c_mF(-\alpha,m-\beta; m+1; |z|^2)mz^{m}-\sum_{m=1}^{\infty}c_{-m}F(-\beta,m-\alpha; m+1; |z|^2)m\overline{z}^{m}.
		\end{split}
		\end{align*}
		Furthermore, for the sequence $\{c_m\}_{m=-\infty}^\infty$, if \eqref{convergence of (alpha,beta)} holds, then 
		$$\limsup\limits_{\abs{m}\rightarrow\infty}|mc_m|^{\frac{1}{|m|}}\leq 1$$ and $$\limsup\limits_{\abs{m}\rightarrow\infty}|-mc_{-m}|^{\frac{1}{|m|}}\leq 1.$$ Therefore, by Theorem \ref{thmB}, we deduce that $\mathcal{D}u$ is an $(\alpha,\beta)$-harmonic function.
	\end{proof}
	\begin{prop} For
    $\alpha>0$ and $\beta>0$, $(\alpha,\beta)$-harmonic functions are subharmonic on $\mathbb{D}$.
\end{prop}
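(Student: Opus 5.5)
The plan is to verify the criterion recalled in Section 2: a $C^2$ real-valued function with $\Delta w\ge 0$ is subharmonic. Since subharmonicity was defined only for real-valued functions, I take $w$ real-valued. Then $\overline{w}=w$ is both $(\alpha,\beta)$- and $(\beta,\alpha)$-harmonic.

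\textbf{Step 1 (Laplacian from the equation).} Write $\Delta=4\,\partial\bar\partial$ and solve \eqref{Operator} for the mixed derivative:
\[
\Delta w(z)=\frac{4}{1-|z|^2}\Bigl(\alpha\beta\,w-\alpha z\,w_z-\beta\bar z\,w_{\bar z}\Bigr).
\]
Subtracting the $(\beta,\alpha)$-version of the same identity gives $(\alpha-\beta)(zw_z-\bar z w_{\bar z})=0$, i.e.\ $(\alpha-\beta)\mathcal{D}w=0$. So either $\alpha=\beta$, or $\mathcal{D}w=0$, which makes $w$ radial. In both cases, for real $w$ we have $\beta\bar z w_{\bar z}=\overline{\alpha z w_z}$ when $\alpha=\beta$. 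Hence the bracket equals $\alpha\beta w-2\alpha\,\mathrm{Re}(zw_z)=\alpha\bigl(\beta w-r\partial_r w\bigr)$, using $r\partial_r w=2\,\mathrm{Re}(zw_z)$. The sign of $\Delta w$ is therefore the sign of $\beta w-r\partial_r w$, because $\alpha>0$.

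\textbf{Step 2 (sign of $\beta w-r\partial_r w$ via Theorem \ref{thmB}).} Expand $w$ with \eqref{series of (alpha,beta)}. The radial part is $c_0F(-\alpha,-\beta;1;r^2)$, and the $m$-th term is $c_mF(-\alpha,m-\beta;m+1;r^2)z^m$. Since $(r\partial_r-m)z^m=0$, the operator $\beta-r\partial_r$ acts on each term only through the hypergeometric factor. By \eqref{F(a,b;c;x)prop_2}, that factor becomes
\[
(\beta-m)F(-\alpha,m-\beta;m+1;r^2)+\tfrac{2\alpha(m-\beta)}{m+1}r^2F(1-\alpha,m-\beta+1;m+2;r^2).
\]
Via a contiguous relation this is a multiple of a single hypergeometric function. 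I would then express $\beta w-r\partial_r w$ as a Poisson-type integral $K_{\alpha,\beta}[g]$ with new boundary data $g$, exactly as in the proof of the Proposition on $\mathcal{D}$. Positivity of $\Delta w$ would then follow from the kernel representation (Lemma \ref{Poisson_integral_Rep}) once $g\ge 0$. For the radial piece, Lemma \ref{Lemma1} shows $F(-\alpha,-\beta;1;r^2)$ is increasing when $\alpha,\beta>0$, which fixes its contribution.

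\textbf{Main obstacle.} The crux is Step 2: turning the coefficientwise identity into a sign statement for $\beta w-r\partial_r w$ that holds for every real $(\alpha,\beta)$-harmonic $w$. I would first try to establish it for the kernel $u_{\alpha,\alpha}$ itself, using Lemma \ref{partial derivative of K}, and then integrate against the boundary data.

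I expect a restriction to be unavoidable here. The expression $\beta w-r\partial_r w$ is odd in $w$, so replacing $w$ by $-w$ reverses its sign. For example, $-c_{\alpha,\alpha}F(-\alpha,-\alpha;1;r^2)$ is $(\alpha,\alpha)$-harmonic and has $\Delta<0$. The argument can therefore only close under an additional normalization such as $f\ge0$ on $\mathbb{T}$. If that holds, the plan reduces to checking $\Delta u_{\alpha,\alpha}\ge0$ pointwise. I would record that hypothesis explicitly at this step rather than claim the unrestricted statement.
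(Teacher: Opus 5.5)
Your diagnosis is right: read literally, the statement is false, and your counterexample is valid. The quickest check is to evaluate \eqref{Operator} at $z=0$, which gives $\Delta w(0)=4\alpha\beta\,w(0)$. Hence any real $(\alpha,\beta)$-harmonic $w$ with $w(0)<0$, for instance $-F(-\alpha,-\beta;1;|z|^2)$, violates the sub-mean-value inequality on small circles about $0$. The paper's proof is consistent with this, because despite its wording it only treats the single radial solution $F=F(-\alpha,-\beta;1;|z|^2)$. With $u=|z|^2$ it writes $\tfrac14(1-u)\Delta F=\alpha\beta F-(\alpha+\beta)uF_u$, argues that $uF_u/F$ is increasing, and bounds it by its limit $\alpha\beta/(\alpha+\beta)$ at $u=1$, computed from \eqref{F(a,b;c;x)prop_1} and \eqref{F(a,b;c;x)prop_2}.

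The positive part of your plan has genuine gaps, and the repair you propose is itself false.

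\textbf{The restriction $f\ge0$ does not help.} The kernel is not subharmonic. By your Step 1 identity, at $z=r\in(0,1)$, where $u_{\alpha,\alpha}(r)=(1+r)^{2\alpha+1}/(1-r)$,
\[
\Delta u_{\alpha,\alpha}(r)=\frac{4\alpha\,u_{\alpha,\alpha}(r)\bigl(\alpha r^2-2(\alpha+1)r+\alpha\bigr)}{(1-r^2)^2},
\]
and the quadratic tends to $-2$ as $r\to1^-$; for $\alpha=1$, $r=\tfrac12$ one gets $\Delta u_{1,1}=-36$. Since $\Delta K_{\alpha,\alpha}[f](r)=\frac{1}{2\pi}\int_0^{2\pi}(\Delta K_{\alpha,\alpha})(re^{-it})f(e^{it})\,dt$, fix such an $r$. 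Then any nonzero continuous $f\ge0$ supported on a sufficiently short arc around $1$ gives a non-subharmonic $w$ with $f\ge0$ and $w(0)>0$.

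\textbf{Step 2 cannot work as designed.} Unlike $\mathcal{D}$, which only multiplies the $m$-th term by $\pm m$, the operator $r\partial_r$ differentiates the hypergeometric factor. So $\beta w-r\partial_r w$ is in general not $(\alpha,\beta)$-harmonic and not of the form $K_{\alpha,\beta}[g]$. The new factor $-2\bigl(x\tfrac{d}{dx}+\tfrac{b}{2}\bigr)F$, with $b=m-\beta$, is generically a ${}_3F_2$, not a multiple of a single ${}_2F_1$.

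\textbf{The radial piece.} The monotonicity from Lemma \ref{Lemma1} points the wrong way: $F$ increasing makes $-r\partial_rF\le0$. What is needed is the quantitative bound $(\alpha+\beta)uF_u<\alpha\beta F$. Also, your bracket $\alpha(\beta w-r\partial_r w)$ is valid only for $\alpha=\beta$. For radial $w$ with $\alpha\ne\beta$ it is $\alpha\beta w-\tfrac{\alpha+\beta}{2}r\partial_r w$, which is exactly the paper's expression.

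What Step 1 does give is a correct replacement statement. For $\alpha\ne\beta$ every real-valued solution is radial, hence equals $w(0)\,F(-\alpha,-\beta;1;|z|^2)$, and it is subharmonic exactly when $w(0)\ge0$. For $\alpha=\beta$ no such sign condition suffices. The radial inequality is cleanest via the hypergeometric equation. Set $H:=\alpha\beta F-(\alpha+\beta)uF_u$; then $H=(1-u)(uF_u)_u=\tfrac14(1-u)\Delta F$, and $H$ satisfies $\frac{d}{du}\bigl[(1-u)^{-(\alpha+\beta)}H\bigr]=\alpha\beta(1-u)^{-(\alpha+\beta)}F_u>0$ with $H(0)=\alpha\beta$. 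Here $F_u=\alpha\beta F(1-\alpha,1-\beta;2;u)>0$ by Lemma \ref{Lemma1} and \eqref{F(a,b;c;x)prop_1}. This also sidesteps the paper's use of Lemma \ref{seriesconverge}. That lemma's hypothesis $s_n>0$ fails here when, e.g., $\alpha<1<\beta$, since the coefficient of $u^2$ is $\alpha\beta(1-\alpha)(1-\beta)/4$.
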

\begin{proof}
	By Theorem \ref{thmB}, we know that $F:=F\left(-\alpha,-\beta;1;|z|^2\right)$ is an $(\alpha,\beta)$-harmonic  function. Note that $$\Delta_{\alpha,\beta} F = 0.$$ It follows that
	\begin{equation}\label{expression for Del}
		\frac{1}{4}\left(1-|z|^2\right)\Delta F=\alpha\beta F-\alpha z\frac{\partial F}{\partial z}-\beta\overline{z}\frac{\partial  F}{\partial \overline{z}}.
	\end{equation} 
	Let $u=|z|^2$. Then $F_z=\overline{z}F_u$ and $F_{\overline{z}}=zF_u.$ Now, we have
	\begin{align}
    \begin{split}\label{simplified Del}
		\alpha\beta F-\alpha z\frac{\partial F}{\partial z}-\beta\overline{z}\frac{\partial  F}{\partial \overline{z}}=\alpha\beta F-\alpha|z|^2F_u-\beta|z|^2F_u=\alpha\beta F\cdot\left(1-\frac{\alpha+\beta}{\alpha\beta}\frac{uF_u}{F}\right).
        \end{split}
	\end{align} 
	If $$F=F(-\alpha,-\beta;1;u):=\sum_{n=0}^\infty a_nu^n,$$ then $$F_u=\sum_{n=1}^\infty na_nu^{n-1}$$ and $$uF_u=\sum_{n=1}^\infty na_nu^n:=\sum_{n=1}^\infty b_nu^n.$$ By Lemma \ref{seriesconverge}, we see that $b_n/a_n=n$ is increasing, which implies that $\frac{uF_u}{F}$ is strictly increasing on $(0,1)$. It follows from Lemma \ref{Lemma1}, \eqref{F(a,b;c;x)prop_1} and \eqref{F(a,b;c;x)prop_2} that
	\begin{equation*}
		\frac{uF_u}{F}<\lim\limits_{u\rightarrow 1}\frac{uF_u}{F}=\frac{\alpha\beta F(-\alpha+1,-\beta+1;2;1)}{F(-\alpha,-\beta;1;1)}=\frac{\alpha\beta}{\alpha+\beta}.
	\end{equation*}
Note that  $F\left(-\alpha,-\beta;1;|z|^2\right)>0$, $\alpha>0$ and $\beta>0$. By \eqref{expression for Del} and \eqref{simplified Del}, we get
	\begin{eqnarray*}
		\frac{1}{4}\left(1-|z|^2\right)\Delta F\geq 0.
	\end{eqnarray*}
	This shows that $F\left(-\alpha,-\beta;1;|z|^2\right)$ is subharmonic on $\ID$.
\end{proof}

\begin{theorem}\label{t4}
	Let $w(z)=K_{\alpha,\beta}[f](z)$ be an $(\alpha,\beta)$-harmonic function with $\mbox{\rm Re}(\alpha+\beta)>-1$ defined on  $\mathbb{D}$  with $f\in L^p(\mathbb{T}),1\leq p\leq \infty$. If $w(z)$ is of the form \eqref{series of (alpha,beta)}, then 
	\begin{itemize}
		\item [(i)] $|c_k|\leq |c_{\alpha,\beta}|\dfrac{(\alpha+1)_{k}}{k!}\|f\|_{L^p(\mathbb{T})},\ \ |c_{-k}|\leq|c_{\alpha,\beta}|\dfrac{(\beta+1)_{k}}{k!}\|f\|_{L^p(\mathbb{T})}.$
		\item [(ii)]$\dfrac{k!}{(\beta+1)_{k}}|c_k|+\dfrac{k!}{(\alpha+1)_{k}}|c_{-k}|\leq 2|c_{\alpha,\beta}|C_q\|f\|_{L^p(\mathbb{T})}$.
	\end{itemize}
In particular, when $p=\infty$, $C_q=\frac{2}{\pi}$ and $\alpha=\beta=0$, we have $$|c_k|+|c_{-k}|\leq\frac{4}{\pi}\|f\|_{\infty}.$$
\end{theorem}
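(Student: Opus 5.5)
Compare the Fourier coefficients of $w_r(e^{i\theta})=w(re^{i\theta})$ computed in two ways. From the series \eqref{series of (alpha,beta)} in Theorem \ref{thmB}, the $k$-th Fourier coefficient of $\theta\mapsto w(re^{i\theta})$ is $c_k F(-\alpha,k-\beta;k+1;r^2)r^k$ for $k\ge0$, and $c_{-k}F(-\beta,k-\alpha;k+1;r^2)r^k$ for the $(-k)$-th one. From the Poisson representation \eqref{poisson_integral}, the same coefficient equals $\hat f(k)$ times the $k$-th Fourier coefficient of $\theta\mapsto K_{\alpha,\beta}(re^{i\theta})$. The kernel expands as
\[
c_{\alpha,\beta}(1-r^2)^{\alpha+\beta+1}\sum_{j,l}\frac{(\alpha+1)_j(\beta+1)_l}{j!\,l!}z^j\bar z^l .
\]
Its $k$-th coefficient is therefore $c_{\alpha,\beta}(1-r^2)^{\alpha+\beta+1}r^k\frac{(\alpha+1)_k}{k!}F(\alpha+k+1,\beta+1;k+1;r^2)$. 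By Euler's transformation, $F(\alpha+k+1,\beta+1;k+1;r^2)=(1-r^2)^{-\alpha-\beta-1}F(-\alpha,k-\beta;k+1;r^2)$, and the prefactor cancels exactly. This yields the clean identities
\[
c_k=c_{\alpha,\beta}\frac{(\alpha+1)_k}{k!}\hat f(k),\qquad c_{-k}=c_{\alpha,\beta}\frac{(\beta+1)_k}{k!}\hat f(-k),
\]
after dividing by the hypergeometric factor, which is nonzero for small $r$ (or letting $r\to0$ in the normalized form). Matching coefficients is legitimate because both expansions converge absolutely for fixed $r<1$, and $f\in L^p\subset L^1$.

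Part (i) then follows from $|\hat f(\pm k)|\le \|f\|_{L^1}\le\|f\|_{L^p(\mathbb{T})}$, the latter by Hölder on a probability space.

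For (ii) I will use the identities again: $\frac{k!}{(\beta+1)_k}|c_k|+\frac{k!}{(\alpha+1)_k}|c_{-k}|$ is written in terms of $|\hat f(k)|$ and $|\hat f(-k)|$ with the ratios $(\alpha+1)_k/(\beta+1)_k$. I interpret $C_q$ as the sharp constant in the classical estimate $|\hat f(k)|+|\hat f(-k)|\le 2C_q\|f\|_{L^p}$. To obtain it, choose the phase $\phi$ so that $|\hat f(k)|+|\hat f(-k)|=\frac1{2\pi}\int f(e^{it})\,2\cos(kt+\phi)\,dt$ up to a unimodular factor. Hölder then gives the bound $2\|\cos\|_{L^q}\|f\|_{L^p}$, so $C_q=\|\cos t\|_{L^q(\mathbb{T})}$. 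For $q=1$ (i.e.\ $p=\infty$) this equals $\frac{2}{\pi}$, which gives the stated special case $|c_k|+|c_{-k}|\le\frac4\pi\|f\|_\infty$ when $\alpha=\beta=0$, since then $c_{0,0}=1$.

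The main obstacle is the weights in (ii). The rescaled quantity is $|c_{\alpha,\beta}|\bigl(\frac{(\alpha+1)_k}{(\beta+1)_k}|\hat f(k)|+\frac{(\beta+1)_k}{(\alpha+1)_k}|\hat f(-k)|\bigr)$, and this reduces to $|c_{\alpha,\beta}|(|\hat f(k)|+|\hat f(-k)|)$ only when the ratio is $1$, for instance $\alpha=\beta$. I would first handle that case cleanly. In general I would try to absorb the ratio, possibly by reinterpreting the normalization, or else state the bound under $\alpha=\beta$ or with the extra factor $\max\bigl(\frac{(\alpha+1)_k}{(\beta+1)_k},\frac{(\beta+1)_k}{(\alpha+1)_k}\bigr)$. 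Since (ii) as stated may need exactly this adjustment, I would check it carefully against the intended $C_q$.
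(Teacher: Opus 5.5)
\paragraph{Coefficient identities and part (i).} Your identities
\[
c_k=c_{\alpha,\beta}\frac{(\alpha+1)_k}{k!}\widehat f(k),\qquad c_{-k}=c_{\alpha,\beta}\frac{(\beta+1)_k}{k!}\widehat f(-k)
\]
are correct, but you reach them by a different route from the paper. The paper computes $\partial_z^k w(0)$ in two ways:
\begin{itemize}
\item From the series it equals $k!\,c_k$, because pure $z$-derivatives at the origin kill every factor depending on $|z|^2$ and every $\bar z^{m}$ term.
\item Differentiating under the Poisson integral at $z=0$ gives $c_{\alpha,\beta}(\alpha+1)_k\widehat f(k)$, because only the factor $(1-ze^{-it})^{-\alpha-1}$ contributes.
\end{itemize}
The same is done with $\partial_{\bar z}^k$, and no Euler transformation is needed. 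Your Fourier-mode matching on $|z|=r$ is heavier but proves more. It shows, mode by mode and at every radius, that $K_{\alpha,\beta}[f]$ has exactly the series form of Theorem~\ref{thmB}, with matching hypergeometric factors. Part (i) then follows as you say. For complex parameters, $(\alpha+1)_k$ and $(\beta+1)_k$ must be read as moduli, both in your argument and in the paper's.

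\paragraph{Part (ii).} Your suspicion is correct, and you should commit to it rather than hedge: (ii) as stated, with $\frac{k!}{(\beta+1)_k}$ on $|c_k|$ and $\frac{k!}{(\alpha+1)_k}$ on $|c_{-k}|$, is false. Take $\alpha=2$, $\beta=0$, $k=1$, $f(e^{it})=e^{it}$. Then $c_{2,0}=1$, and your identity gives $c_1=3$, $c_{-1}=0$. Indeed $w(z)=3F(-2,1;2;|z|^2)z=3z-3z^2\bar z+z^3\bar z^2$ is $(2,0)$-harmonic with boundary values $e^{i\theta}$. The stated left-hand side is therefore $3$, while the right-hand side is $2C_q\le 2$ for every $p\in[1,\infty]$.

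What the paper's proof actually establishes, by exactly your phase-plus-H\"older argument, is the version with the weights exchanged:
\[
\frac{k!}{(\alpha+1)_k}|c_k|+\frac{k!}{(\beta+1)_k}|c_{-k}|\le 2|c_{\alpha,\beta}|C_q\|f\|_{L^p(\mathbb{T})}.
\]
By your identities its left side equals $|c_{\alpha,\beta}|\bigl(|\widehat f(k)|+|\widehat f(-k)|\bigr)$. So it follows at once, with no restriction to $\alpha=\beta$ and no extra $\max$ factor. You should present the counterexample together with this corrected inequality. The special case $\alpha=\beta=0$, $|c_k|+|c_{-k}|\le\frac{4}{\pi}\|f\|_\infty$, is unaffected by the swap, and your treatment of it is fine.
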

\begin{proof}
	Let $w(z)$ be an $(\alpha,\beta)$-harmonic function defined on $\ID$. By \eqref{poisson_integral},
    we have
	\begin{equation}\label{47}
			w(z) = \frac{1}{2\pi} \int_0^{2\pi} c_{\alpha,\beta} \frac{\left(1 - |z|^2\right)^{\alpha + \beta + 1}}{(1 - z e^{-it})^{\alpha+1} (1 - \overline{z} e^{it})^{\beta+1}} f\left(e^{it}\right)dt.
	\end{equation}
	Now, by direct calculation, we get
	
	\begin{align*}
		w_{z}(0)
		&= \frac{c_{\alpha,\beta}}{2\pi}\int_{0}^{2\pi} (\alpha+1)\,e^{it} f(e^{it})\,dt,\\
		w_{ z}^{(2)}(0)
		&= \frac{c_{\alpha,\beta}}{2\pi}\int_{0}^{2\pi} (\alpha+2)(\alpha+1)\,e^{2it} f(e^{it})\,dt,\\ 
		&\ \vdots \notag\\
		w_{ z}^{(k)}(0)
		&= \frac{c_{\alpha,\beta}}{2\pi}\int_{0}^{2\pi} (\alpha+1)_{k}\, e^{ikt} f(e^{it})\,dt.
		\end{align*}
	Similarly, we have
	$$w_{ \overline{z}}^{(k)}(0)
	= \frac{c_{\alpha,\beta}}{2\pi}\int_{0}^{2\pi} (\beta+1)_{k}\, e^{ikt} f(e^{it})\,dt.
	$$
Thus, for \(k=1,2,\ldots\), we know that 
	$$
	c_k
	= \frac{w_z^{(k)}(0)}{k!}
	= \,
	\frac{c_{\alpha,\beta}}{2\pi}\int_{0}^{2\pi} \frac{(\alpha+1)_{k}}{k!}\,e^{ikt} f\left(e^{it}\right)dt.
	$$
	Moreover, we find that
	$$
	c_{-k}
	= \frac{w_{\bar z}^{(k)}(0)}{k!}
	= \,
	\frac{1}{2\pi}\int_{0}^{2\pi} \frac{(\beta+1)_{k}}{k!}\,e^{ikt} f\left(e^{it}\right)\,dt.
	$$
	Applying H\"older's inequality, we get
	$$|c_k|\leq |c_{\alpha,\beta}|\frac{(\alpha+1)_{k}}{k!}\|f\|_{L^p(\mathbb{T})}\ \ \text{and}\ \ |c_{-k}|\leq|c_{\alpha,\beta}|\frac{(\beta+1)_{k}}{k!}\|f\|_{L^p(\mathbb{T})}.$$
Let $c_k=|c_k|e^{i\alpha_k}$, $c_{-k}=|c_{-k}|e^{-i\beta_k}$ and $\theta_k=\dfrac{\alpha_k+\beta_k}{2}$. Then 
$$
|c_k|=\frac{(\alpha+1)_{k}}{ k!}\frac{c_{\alpha,\beta}}{2\pi}
\int_{0}^{2\pi}
f\left(e^{it}\right)e^{-ikt}e^{-i\alpha_k}dt$$ and
$$
|c_{-k}|=\frac{(\beta+1)_{k}}{ k!}
\frac{c_{\alpha,\beta}}{2\pi}\int_{0}^{2\pi}
f\left(e^{it}\right)e^{ikt}e^{i\beta_k}dt.
$$
Therefore, we deduce that
\begin{align*}
\begin{split}
	&\frac{ k!}{(\alpha+1)_{k}}\,|c_k|+ \frac{k!}{(\beta+1)_{k}}\,|c_{-k}|
	\\=&\left|
	\frac{c_{\alpha,\beta}}{2\pi i}\int_{0}^{2\pi}f\left(e^{it}\right) \left(e^{-ikt}e^{-i\alpha_k}+e^{ikt}e^{i\beta_k}\right)dt\right|\\ \leq&\frac{|c_{\alpha,\beta}|}{2\pi}
	\int_{0}^{2\pi}\abs{f\left(e^{it}\right)}\,\bigl|e^{-ikt}e^{-i\alpha_k}+e^{ikt}e^{i\beta_k}\bigr|\,dt
	\\=&\frac{|c_{\alpha,\beta}|}{\pi}\int_{0}^{2\pi}\abs{f\left(e^{it}\right)}\,\bigl|\cos k(t+\theta_k)\bigr|dt\\ \leq&
	2|c_{\alpha,\beta}|\left(\frac{1}{2\pi}\int_{0}^{2\pi}
	\abs{f\left(e^{it}\right)}^{p}\,dt\right)^{\frac{1}{p}}
	\left(\frac{1}{2\pi}\int_{0}^{2\pi}
	|\cos k(t+\theta_k)|^{q}\,dt\right)^{\frac{1}{q}}\\=&
	2|c_{\alpha,\beta}|C_q\,\|f\|_{L^p(\mathbb{T})} ,
    \end{split}
	\end{align*}
where
\[
C_q
=
\left(
\frac{1}{2\pi}
\int_{0}^{2\pi}
|\cos(kt)|^{q}\,dt
\right)^{\frac{1}{q}},
\]
and \(q\) is the conjugate exponent of \(p\), satisfying
\(\frac{1}{p}+\frac{1}{q}=1\).
\end{proof}

By setting $\alpha=\beta=0$ in Theorem \ref{t4}, we get the following result due to Shi, Li and Lian 
\cite[Theorem 2.1]{sll}.

\begin{cor}
Suppose that $F \in L^{p}(\mathbb{T})$ for $1 \le p \le \infty$ and
$f = \mathcal{P}[F]$ is a harmonic mapping in $\mathbb{D}$.
If
\[
f(z) = \sum_{n=0}^{\infty} a_n z^n + \sum_{n=1}^{\infty} \overline{b_n}\,\overline{z}^n
\]
for any positive integer $n$, then 
\begin{itemize}
\item[(i)]
		
$|a_n|,\ |b_n| \le \|f\|_{L^p(\mathbb{T})} .$ The equalities hold for the functions $F\left(e^{it}\right) = e^{int}$ and
$F\left(e^{it}\right) = e^{-int}$, respectively.
		
\item[(ii)]
$
|a_n| + |b_n| \le 2 C_q \|f\|_{L^p(\mathbb{T})} ,
$
where
\[
C_q = \left( \frac{1}{2\pi} \int_0^{2\pi} |\cos(nt)|^q \, dt \right)^{1/q} \le 1
\ \ \text{and} \ \ \frac{1}{p} + \frac{1}{q} = 1.
	\]
\end{itemize}
	Especially, when $p = \infty$, $C_q = \frac{2}{\pi}$, we have
\[
|a_n| + |b_n| \le \frac{4}{\pi} \|F\|_\infty.
\]
\end{cor}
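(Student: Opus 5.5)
The corollary is the special case $\alpha=\beta=0$ of Theorem \ref{t4}. My plan is to specialize that theorem and then check the extra claims separately: the equality cases, the bound $C_q\le 1$, and the value $C_q=2/\pi$ at $p=\infty$. For $\alpha=\beta=0$ we have $c_{0,0}=\Gamma(1)^2/\Gamma(1)=1$, and the kernel $K_{0,0}$ is the classical Poisson kernel, so $w=\mathcal{P}[F]$. Since $F(0,m;m+1;x)\equiv 1$, the series \eqref{series of (alpha,beta)} reduces to $\sum_{m\ge0}c_mz^m+\sum_{m\ge1}c_{-m}\bar z^m$. We can therefore identify $c_n=a_n$ and $c_{-n}=\overline{b_n}$, which gives $|c_{-n}|=|b_n|$. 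Also $(1)_n/n!=1$, and the norm on the right is that of the boundary function $F$.

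Because the proof of Theorem \ref{t4} contains a few slips (a missing $c_{\alpha,\beta}$, and $e^{ikt}$ where $e^{-ikt}$ belongs), I would redo the short direct argument in the harmonic case rather than quote it.

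\textbf{Part (i).} Expanding the Poisson kernel as $\sum_{n\ge0} z^ne^{-int}+\sum_{n\ge1}\bar z^ne^{int}$ gives
\[
a_n=\frac{1}{2\pi}\int_0^{2\pi}F(e^{it})e^{-int}\,dt \quad\text{and}\quad \overline{b_n}=\frac{1}{2\pi}\int_0^{2\pi}F(e^{it})e^{int}\,dt .
\]
H\"older's inequality, together with $\|e^{\mp int}\|_{L^q}=1$, yields $|a_n|,|b_n|\le\|F\|_{L^p}$. For $F(e^{it})=e^{int}$ we get $a_n=1=\|F\|_{L^p}$, and similarly $F(e^{it})=e^{-int}$ gives $|b_n|=1$.

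\textbf{Part (ii).} Write $a_n=|a_n|e^{i\phi}$ and $\overline{b_n}=|b_n|e^{i\psi}$. Then
\[
|a_n|+|b_n|=\frac{1}{2\pi}\int_0^{2\pi}F(e^{it})\left(e^{-i(nt+\phi)}+e^{i(nt-\psi)}\right)dt .
\]
Factoring out a unimodular constant, the bracket has modulus $2|\cos(nt+\tau)|$ with $\tau=(\phi+\psi)/2$. Applying H\"older's inequality gives
\[
|a_n|+|b_n|\le 2\|F\|_{L^p}\left(\frac{1}{2\pi}\int_0^{2\pi}|\cos(nt+\tau)|^q\,dt\right)^{1/q}.
\]
By periodicity the last factor equals $C_q$. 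We have $C_q\le1$ because $|\cos|\le1$.

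\textbf{The case $p=\infty$.} Here $q=1$ and $C_1=\frac{1}{2\pi}\int_0^{2\pi}|\cos nt|\,dt=\frac{2}{\pi}$, which gives $|a_n|+|b_n|\le\frac{4}{\pi}\|F\|_\infty$.

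The only delicate step is the phase bookkeeping in (ii), specifically choosing the signs so that the two exponentials combine into a single cosine. The rest is routine.
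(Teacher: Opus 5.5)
Your proof is correct and follows the paper's route, namely the proof of Theorem \ref{t4} specialized to $\alpha=\beta=0$: Poisson coefficient formulas, H\"older for (i), and for (ii) merging the two phases into a single cosine before applying H\"older. Your corrections of the slips in that proof are accurate, but with your convention $\overline{b_n}=|b_n|e^{i\psi}$ the bracket equals $2e^{-i(\phi+\psi)/2}\cos\bigl(nt+\tfrac{\phi-\psi}{2}\bigr)$, so $\tau=(\phi-\psi)/2$ rather than $(\phi+\psi)/2$ — harmless, since $\tau$ disappears by periodicity.
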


\begin{theorem}\label{t5}
	Let $w(z)=K_{\alpha,\beta}[f](z)$ be an $(\alpha,\beta)$-harmonic function with ${\rm Re}(\alpha+\beta)>-1$ defined on  $\mathbb{D}$  with $f\in L^p(\mathbb{T}),1\leq p\leq \infty$. Then for all  $z=re^{i\theta}$, 
	\begin{eqnarray*}
			&&\frac{1}{\left(|\alpha+1|+|\beta||z|\right)}|w_z|+\frac{1}{\left(|\beta+1|+|\alpha||z|\right)}|w_{\overline{z}}|\\&& \quad\leq\begin{cases}
				2|c_{\alpha,\beta}|\exp\left(\frac{\pi}{2}|\mbox{\rm Im}(\alpha- \beta)|\right)\dfrac{(1-|z|^2)^{\mbox{\rm Re}(\alpha+\beta)}}{(1-|z|)^{\mbox{\rm Re}(\alpha+\beta)+2}}\|f\|_{L^1(\mathbb{T})} &  (p=1),\\\dfrac{2|c_{\alpha,\beta}|\|f\|_{L^p(\mathbb{T})}}{(1-|z|^2)^{1+1/p}}\exp\left(\frac{\pi}{2}|\mbox{\rm Im}(\alpha- \beta)|\right)F_2&(1<p\leq \infty).
			\end{cases}
		\end{eqnarray*}
		where \begin{equation}\label{00}F_2=F\left(1-\frac{(\text{\rm Re}(\alpha+\beta)+2)q}{2},1-\frac{(\text{\rm Re}(\alpha+\beta)+2)q}{2};1;r^2\right)^{\frac{1}{q}}.\end{equation}
\end{theorem}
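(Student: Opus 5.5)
The plan is to reuse the kernel-derivative estimates from the proof of Theorem \ref{bound_of_w_z} for both $w_z$ and $w_{\overline z}$ and add them, which explains the factor $2$. By differentiating \eqref{poisson_integral} under the integral sign and applying Lemma \ref{partial derivative of K} together with \eqref{WVB_3.3}, we already have, for $z=re^{i\theta}$,
\[
\bigg|\frac{\partial}{\partial z}K_{\alpha,\beta}(ze^{-it})\bigg|\leq |c_{\alpha,\beta}|\exp\left(\tfrac{\pi}{2}|\mbox{\rm Im}(\alpha-\beta)|\right)\bigl(|\alpha+1|+|\beta||z|\bigr)\frac{(1-|z|^2)^{\mbox{\rm Re}(\alpha+\beta)}}{|1-ze^{-it}|^{\mbox{\rm Re}(\alpha+\beta)+2}},
\]
and the analogous bound for $\partial_{\overline z}K_{\alpha,\beta}$ with the factor $|\beta+1|+|\alpha||z|$. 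Dividing each by its prefactor, we obtain
\[
\frac{|w_z|}{|\alpha+1|+|\beta||z|}+\frac{|w_{\overline z}|}{|\beta+1|+|\alpha||z|}\leq \frac{2|c_{\alpha,\beta}|}{2\pi}\exp\left(\tfrac{\pi}{2}|\mbox{\rm Im}(\alpha-\beta)|\right)\int_0^{2\pi}\frac{(1-r^2)^{\mbox{\rm Re}(\alpha+\beta)}}{|1-re^{i(\theta-t)}|^{\mbox{\rm Re}(\alpha+\beta)+2}}|f(e^{it})|\,dt .
\]

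For $p=1$, we bound the kernel pointwise by its supremum. Since $|1-re^{i(\theta-t)}|\geq 1-r$ and $\mbox{\rm Re}(\alpha+\beta)+2>1>0$, the integrand is at most $(1-r^2)^{\mbox{\rm Re}(\alpha+\beta)}(1-r)^{-(\mbox{\rm Re}(\alpha+\beta)+2)}|f(e^{it})|$. Integrating then gives exactly $\|f\|_{L^1(\mathbb{T})}$ times the stated factor, which is the first case.

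For $1<p\leq\infty$, we apply H\"older's inequality with conjugate exponent $q$, so that the kernel integral is bounded by $(I_1/2\pi)^{1/q}\|f\|_{L^p(\mathbb{T})}$, where $I_1$ is as in \eqref{I1}. The proof of Theorem \ref{bound_of_w_z} computed, via the Möbius change of variables and Lemma \ref{lem4},
\[
I_1=2\pi(1-r^2)^{1-2q}F\Bigl(1-\tfrac{(\mbox{\rm Re}(\alpha+\beta)+2)q}{2},1-\tfrac{(\mbox{\rm Re}(\alpha+\beta)+2)q}{2};1;r^2\Bigr).
\]
Raising to the power $1/q$ gives $(1-r^2)^{1/q-2}F_2=(1-r^2)^{-1-1/p}F_2$, since $1/q-2=-1-1/p$. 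This yields the second case.

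The main obstacle is the endpoint $p=\infty$, that is $q=1$. Here we must check that the Möbius substitution and Lemma \ref{lem4} still apply, which they do because the formula holds for every real exponent. With $q=1$ the exponent becomes $(1-r^2)^{-1}$, consistent with $1+1/p=1$. We should also confirm that the H\"older step is legitimate for $p=\infty$; it reduces to pulling out $\|f\|_\infty$. No estimate of the hypergeometric factor by its limit at $r\to1$ is needed, since the statement keeps $F_2$ as a function of $r$.
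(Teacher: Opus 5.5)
Your proposal is correct and follows the paper's proof essentially step for step. You add the pointwise bounds for $\partial_z K_{\alpha,\beta}$ and $\partial_{\overline z}K_{\alpha,\beta}$ from Lemma~\ref{partial derivative of K} and \eqref{WVB_3.3} to get the factor $2$, use $|1-ze^{-it}|\geq 1-|z|$ for $p=1$, and for $1<p\leq\infty$ combine H\"older's inequality with the evaluation of $I_1$ from the proof of Theorem~\ref{bound_of_w_z}. Your normalization $\frac{2|c_{\alpha,\beta}|}{2\pi}$ in the combined estimate is the correct one; the paper's displayed intermediate inequality has $\frac{2|c_{\alpha,\beta}|}{\pi}$, a slip that does not carry into its final constants.
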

\begin{proof}
	By Lemma \ref{partial derivative of K}, we have 
   \begin{align*}
   \begin{split}
       &w_z(z)\\=&\frac{1}{2\pi} \int_0^{2\pi} c_{\alpha,\beta} \bigg(\frac{(\alpha + 1)(1-|z|^2)^{\alpha+\beta}}{(1-ze^{-it})^{\alpha+2}(1-\overline{z}e^{it})^\beta} - \frac{\beta z (1-|z|^2)^{\alpha+\beta}}{(1-ze^{-it})^{\alpha+1}(1-\overline{z}e^{it})^{\beta+1}}\bigg) f\left(e^{it}\right) dt
       \end{split}
       \end{align*} 
       and \begin{align*}
   \begin{split}&w_{\overline{z}}(z)\\=&\frac{1}{2\pi} \int_0^{2\pi} c_{\alpha,\beta} \bigg(\frac{(\beta + 1)(1-|z|^2)^{\alpha+\beta}}{(1-ze^{-it})^{\alpha}(1-\overline{z}e^{it})^{\beta+2}} - \frac{\alpha z (1-|z|^2)^{\alpha+\beta}}{(1-ze^{-it})^{\alpha+1}(1-\overline{z}e^{it})^{\beta+1}}\bigg) f\left(e^{it}\right) dt.\end{split}
       \end{align*} By \eqref{WVB_3.3}, we have
	\begin{align}
    \begin{split}
    \label{bound_|w_z|+|w_bar{z}|}
		&\frac{1}{|\alpha+1|+|\beta||z|}|w_z|+\frac{1}{|\beta+1|+|\alpha||z|}|w_{\overline{z}}|\\& \quad\leq \frac{2|c_{\alpha,\beta}|}{\pi}\int_0^{2\pi} \exp\left(\frac{\pi}{2}|\mbox{\rm Im}(\alpha- \beta)|\right)\dfrac{(1-|z|^2)^{\mbox{\rm Re}(\alpha+\beta)}}{(|1-ze^{-it}|)^{\mbox{\rm Re}(\alpha+\beta)+2}}\abs{f\left(e^{it}\right)}dt.
	\end{split}
    \end{align}
	For $p=1$, we obtain
	\begin{align}
    \begin{split}
		&\frac{1}{|\alpha+1|+|\beta||z|}|w_z|+\frac{1}{|\beta+1|+|\alpha||z|}|w_{\overline{z}}|\\& \quad\leq  2|c_{\alpha,\beta}|\exp\left(\frac{\pi}{2}|\mbox{\rm Im}(\alpha- \beta)|\right)\dfrac{(1-|z|^2)^{\mbox{\rm Re}(\alpha+\beta)}}{(1-|z|)^{\mbox{\rm Re}(\alpha+\beta)+2}}\|f\|_{L^1(\mathbb{T})}.
	\end{split}
    \end{align}
	For $1<p\leq \infty$, combining
H\"older's inequality, \eqref{bound_|w_z|+|w_bar{z}|} with Theorem \ref{bound_of_w_z}, we have
	\begin{align*}
    \begin{split}
		&\frac{1}{|\alpha+1|+|\beta||z|}|w_z|+\frac{1}{|\beta+1|+|\alpha||z|}|w_{\overline{z}}|\\\leq&2|c_{\alpha,\beta}|\left(\frac{1}{2\pi}\int_{0}^{2\pi}
		|f(e^{it})|^{p}\,dt\right)^{\frac{1}{p}} \\&\quad \times\left(\frac{1}{2\pi}\int_{0}^{2\pi}\dfrac{\left(\exp\left(\frac{\pi}{2}|\mbox{\rm Im}(\alpha- \beta)|\right)\right)^q\left(1-|z|^2\right)^{\left(\mbox{\rm Re}(\alpha+\beta)\right) q}}{(|1-ze^{-it}|)^{\left(\mbox{\rm Re}(\alpha+\beta)+2\right)q}} dt\right)^{\frac{1}{q}}\\=&\dfrac{2|c_{\alpha,\beta}|\|f\|_{L^p(\mathbb{T})} }{\left(1-|z|^2\right)^{1+1/p}}\exp\left(\frac{\pi}{2}|\mbox{\rm Im}(\alpha- \beta)|\right)F_2,   
    \end{split}
	\end{align*}
where $F_2$ is given by \eqref{00}. This completes the proof of Theorem \ref{t5}.
\end{proof}

\begin{rem}
    By setting $p=\infty$ and $\alpha=\beta=0$ in Theorem \ref{t5}, we get  the classical Schwarz-Pick lemma for bounded harmonic mappings. 
\end{rem}
	\vskip.02in
	\noindent{\bf Acknowledgements.} 
	Z.-G. Wang was partially supported by the \textit{Key Project of Education Department of Hunan Province} under Grant no. 25A0668, and the \textit{Natural Science Foundation of Changsha} under Grant no. kq2502003 of the P. R. China. Brindha Valson E and R. Vijayakumar acknowledge the Department of Science and Technology (DST), Government of India, for supporting the Department of Mathematics at NIT Calicut under the FIST program, which enabled this research. 
    The authors thank Dr. Qingtian Shi for his valuable comments and suggestions during the preparation of this paper.

	

	
		\vskip.05in
	\noindent{\bf Authors' contributions.} All authors contributed equally to this work.
	
		\vskip.05in
	\noindent{\bf Conflicts of interest.} The authors declare that they have no conflict of interest.
	
	\vskip.05in
	\noindent{\bf Data availability.} Data sharing does not apply to this article, as no datasets were generated or analyzed during the current study.
		
\end{document}